\renewcommand\eqref[1]{(\ref{#1})} 
 \newtheorem{thm}{Theorem}[section]
 \newtheorem{cor}[thm]{Corollary}
 \newtheorem{lem}[thm]{Lemma}
 \newtheorem{prop}[thm]{Proposition}
 \theoremstyle{definition}
 \theoremstyle{remark}
 \newtheorem{rem}[thm]{Remark}
 \numberwithin{equation}{section}
\newcommand{\half}{\frac{1}{2}}
\newcommand{\ene}{\mathbb{N}}
\newcommand{\er}{\mathbb{R}}
\newcommand{\ce}{\mathbb{C}}
\newcommand{\zn}{\mathbb{Z}^n}
\newcommand{\tn}{\mathbb{T}^n}
\newcommand{\bi}{\begin{itemize}}
\newcommand{\ei}{\end{itemize}}
\newcommand{\be}{\begin{enumerate}}
\newcommand{\ee}{\end{enumerate}}
\newcommand{\beq}{\begin{equation}}
\newcommand{\eq}{\end{equation}}
\newcommand{\cdxi}{\ce^{d_{\xi}\times d_{\xi}}}
\newcommand{\subll}{\mathcal{L}_{sub}}
\newcommand{\efeeg}{{\mathcal{F}}_G}
\newcommand{\cL}{\mathcal{L}}
\def\jp#1{{\left\langle{#1}\right\rangle}}
\def\Op{{{\rm Op}}}
\DeclareMathOperator{\Tr}{Tr}
\def\d{\mathrm d}
\def\Gh{{\widehat{G}}}
\def\dxi{{d_\xi}}
\def\HS{{\mathtt{HS}}}
\def\Rn{{{\mathbb R}^n}}
\def\Tn{{{\mathbb T}^n}}
\def\Zn{{{\mathbb Z}^n}}
\def\T{{{\mathbb T}^1}}
\def\N{{{\mathbb N}}}
\def\C{{{\mathbb C}}}
\def\SU2{{{\rm SU(2)}}}
\def\SO3{{{\rm SO(3)}}}
\def\lapsu2{{{\mathcal L}_\SU2}}
\def\LapG{{\mathcal L}_G}
\def\Op{\text{\rm Op}}
\begin{document}

%
\title[Schatten classes  and traces on compact groups]
 {Schatten classes and traces on compact groups}

\author[Julio Delgado]{Julio Delgado}

\address{%
Department of Mathematics\\
Imperial College London\\
180 Queen's Gate, London SW7 2AZ\\
United Kingdom
}

\email{j.delgado@imperial.ac.uk}

\thanks{The first author was supported by Marie Curie IIF 301599 and by the Leverhulme Grant RPG-2014-02. The second author was supported by EPSRC Leadership Fellowship 
EP/G007233/1 and by EPSRC Grant EP/K039407/1. No new data was collected or generated during the course of the research.}
\author[Michael Ruzhansky]{Michael Ruzhansky}

\address{%
Department of Mathematics\\
Imperial College London\\
180 Queen's Gate, London SW7 2AZ\\
United Kingdom
}

\email{m.ruzhansky@imperial.ac.uk}

\subjclass[2010]{Primary 35S05; Secondary 43A75, 22E30, 47B06.}

\keywords{Compact Lie groups, topological groups, pseudo-differential operators, eigenvalues, trace formula, Schatten classes. }

\date{\today}
\begin{abstract}
In this paper we present symbolic criteria for invariant operators on compact topological groups $G$ characterising the Schatten--von Neumann
classes $S_{r}(L^{2}(G))$ for all $0<r\leq\infty$.     
Since it is known that for pseudo-differential operators criteria in terms of kernels may be
less effective (Carleman's example), our criteria are given in terms of the operators' symbols
defined on the noncommutative analogue of the phase space $G\times\widehat{G}$, where 
$G$ is a compact topological (or Lie) group and $\widehat{G}$ is its unitary dual. 
We also show results concerning general non-invariant operators as well as Schatten
properties on Sobolev spaces.  
A trace formula is derived for operators in the Schatten class $S_{1}(L^{2}(G))$. 
Examples are given for Bessel potentials associated to sub-Laplacians 
(sums of squares) on compact Lie groups, 
as well as for powers of the sub-Laplacian and for other non-elliptic
operators on $\SU2\simeq\mathbb S^3$ and on $\SO3$.
\end{abstract}

\maketitle
\section{Introduction}

Let $G$ be a compact topological (or Lie) group. In this paper we address the problem of 
 characterising the Schatten-von Neumann classes $S_{r}$ of operators on the Hilbert
space $L^{2}(G)$ for $0<r\leq \infty$. To do this, we make use of the global 
quantization recently developed in \cite{rt:book} and \cite{rt:groups} as a noncommutative analogue 
of the Kohn-Nirenberg quantization of operators on $\Rn$. For brevity in the sequel we will refer to 
Schatten-von Neumann classes simply as to Schatten classes.

In view of Carleman's example \cite{car:ex} recalled below, it is well
understood  that there is an obstruction 
when looking for good criteria in terms of kernels for ensuring that an operator belongs
to the Schatten class $S_r$ below the index $r<2$. 

In this paper we show that using the notion of a matrix symbol of an operator on a compact
group, we can characterise or give sufficient conditions for 
operators in Schatten classes $S_r(L^2(G))$ for all $0<r\leq\infty$.
Moreover, our characterisations do not assume any regularity condition on the
symbols. Criteria for operators with symbols in H{\"o}rmander classes have been analysed,
see e.g. Shubin \cite[Section 27]{shubin:r}, but the regularity of symbols is required
for the analysis there. Recently, there was a surge of interest in finding criteria for
Schatten classes in terms of symbols with lower regularity, see e.g.
\cite{Toft:Schatten-AGAG-2006,Toft:Schatten-modulation-2008,
Buzano-Toft:Schatten-Weyl-JFA-2010}.
In this paper we can drop the regularity assumptions completely 
(at least in the considered settings) due to the
technique of matrix quantization that we are using, instead of the traditional
Kohn-Nirenberg quantization in the manifold setting.

 In particular, for operators
acting on $L^{2}(G)$, in Corollary \ref{mainT1} 
we give the characterisation of operators in the Schatten class
$S_{r}(L^{2}(G))$, $0<r\leq\infty$, in terms of symbols of their powers,
and in Theorem \ref{charnucSr} we elaborate this to provide the characterisation
of left-invariant operators in terms of their symbols. 
In particular, the class $S_{\infty}$ of invariant bounded operators on $L^2(G)$ can be
characterised by the uniform boundedness of the operator matrix-norms of their symbol,
see \eqref{EQ:Sinfty}.
Here we note that symbolic criteria for the
$L^{p}$-boundedness of operators on compact Lie groups, the 
Mikhlin-H\"ormander multiplier theorem and its extension to non-invariant
operators for $1<p<\infty$, are presented in
\cite{Ruzhansky-Wirth:Lp-FAA} and in \cite{Ruzhansky-Wirth-multipliers}. 
The nuclearity of pseudo-differential operators on 
the circle $\T$ has been
recently analysed in \cite{dw:trace} but the situation in the present paper is much more
subtle because of the necessarily appearing non-trivial
multiplicities of the eigenvalues of the Laplacian on
the noncommutative compact Lie groups; 
for $0<r\leq 1$, 
the $r$-nuclearity on $L^p$-spaces on compact
Lie groups is addressed in \cite{dr13a:nuclp}.

In order to illustrate the relevance of our symbolic criteria we briefly recall a classical result of 
Carleman (\cite{car:ex}) from 1916, who has constructed a periodic continuous function 
$\varkappa(x)=\sum\limits_{k=-\infty}^{\infty}c_k e^{2\pi ikx}$, i.e. a continuous function on the 
commutative Lie group $\T$, for which the Fourier coefficients $c_k$ satisfy
$$\sum\limits_{k=-\infty}^{\infty} |c_k|^r=\infty\qquad \textrm{ for any } r<2.$$     
Now, considering the normal operator 
\begin{equation}\label{EQ:Carleman}
Tf=f*\varkappa
\end{equation}
acting on $L^2(\T)$ 
one obtains that the sequence $(c_k)_k$ forms a complete system of eigenvalues of 
this operator corresponding to the complete orthonormal system $\phi_k(x)=e^{2\pi ikx}$, 
$T\phi_k=c_k\phi_k$. 
The system $\phi_k$ is also complete for $T^*$, $T^*\phi_k=\overline{c_k}\phi_k$, 
the singular values of $T$ are given by $ s_k(T)=|c_k|$ and
hence
$$\sum\limits_{k=-\infty}^{\infty}s_k(T)^r=\infty ,$$ 
 for $r<2$. Therefore, the invariant operator $T$ is not in $S_r(L^2(\T))$ for $r<2$. However, the continuous integral
 kernel $K(x,y)=\varkappa (x-y)$ satisfies any kind of integral condition of the form 
 $$ \int\int |K(x,y)|^s dxdy<\infty$$ due to the boundedness of $K$. This shows that already in 
 the problem of finding sufficient conditions for an operator to belong to the Schatten 
 class $S_r(L^2(\T))$ with  $r<2$, it is impossible to formulate a sufficient condition of this type for the kernel. 
 On the other hand, it is possible to find such criteria assuming additional regularity of the kernel, the problem
 that was addressed in \cite{dr:suffkernel}, and a comparison between kernel and symbol
 criteria in those cases was done in \cite{dr:sdk}. However, here we are interested in
 exploring criteria with no regularity assumptions on the kernel (and on the symbol).
 
 In this work we will be looking to establish conditions imposed on symbols 
 instead of kernels characterising the operators from Schatten classes. 
We give examples of our results on the torus $\Tn$, on $\SU2$ and on $\SO3$.
Since $\SU2\simeq\mathbb S^3$ are globally diffeomorphic and isomorphic,
with the product of matrices in $\SU2$ corresponding to the quaternionic product
on $\mathbb S^3$, our results immediately extend to $\mathbb S^3$ since the symbolic calculus
is stable under global diffeomorphisms preserving the groups structure
(see \cite[Section 12.5]{rt:book} or \cite{rt:groups}). 
Moreover, in view of the recently resolved Poincar\'e conjecture, any simply-connected closed
3-manifold $M$ is globally diffeomorphic to $\mathbb S^3$ inducing the corresponding group
structure on $M$. Thus, our criteria extend to such $M$ as well with no changes in formulations.

We observe that most of the results of this paper hold true for compact topological groups, without
assuming the differential Lie group structure, which we need only in the statements involving differential operators. We follow \cite[Chapter 7]{rt:book} in assuming that the unit element set $\{e\}$ is closed so that the topological groups are Hausdorff, and also refer to it for other background details on compact topological groups and their representation theory. 

Moreover, we can observe that the notion of matrix-valued symbols on compact groups and our
criteria become instrumental and can be used as a tool in questions which may be formulated 
intrinsically on the group without referring to a particular quantization. 

For example, in Theorem \ref{sobolev},
we conclude (by a simple argument) that Schatten 
classes for left-invariant operators on Sobolev spaces $H^s(G)$
are independent with respect to the order $s$ of the space. 

To give another specific example of conclusions independent of the quantization used,
let $\cL$ and $\cL_{sub}$ denote the (negative-definite) Laplacian and the sub-Laplacian on the group
$G=\SU2$, respectively, (or, with the same conclusion, on the group $\SO3$, or on the quaternionic sphere $\mathbb S^3).$
Then we will show in Section \ref{SEC:examples} that for $0<r<\infty$, we have
$$
(I-\cL)^{-\alpha/2} \in S_r(L^2(\mathbb S^3)) \quad\textrm{ if and only if }\quad \alpha r>3,
$$
while
$$
(I-\cL_{sub})^{-\alpha/2} \in S_r(L^2(\mathbb S^3)) \quad\textrm{ if and only if }\quad \alpha r>4.
$$
Here the powers $(I-\cL_{sub})^{-\alpha/2}$ are well-defined since $\cL_{sub}$ is hypoelliptic,
which follows from H\"ormander's sum of the squares theorem
(see also Greenfield and Wallach \cite{Greenfield-Wallach:hypo-TAMS-1973} for a general framework for
this, and \cite{Ruzhansky-Turunen-Wirth:arxiv} for the associated hypoelliptic symbolic calculus on compact Lie groups).
We also give an example for the family of operators
$$
\mathcal H_{\gamma}=iD_3-\gamma(D_1^2+D_2^2), \; 0<\gamma<\infty,
$$
which are not covered by H\"ormander's sum of the squares theorem. The criterion established in
Theorem \ref{charnucSr} 
 allows one to easily conclude that the operators $(I+\mathcal H_\gamma)^{-\alpha/2}$ are never in
Schatten classes for $0<\gamma\leq 1$, while for $\gamma>1$ show that it is in the class $S_r$
if and only if $\alpha r>4$.

In Section \ref{SEC:Prelim} we make a short introduction to the global quantization on compact groups.
In Section \ref{SEC:nuclearity-2} we give characterisations 
for general operators in Schatten classes
$S_{r}(L^{2}(G))$ for $0<r<\infty$, elaborate this in the case of left-invariant operators,
 and deduce several corollaries, including a criterion for Bessel potentials. 
 In Proposition \ref{PROP:sub-Laplacian} we look at the example of powers of sums of
 squares of vector fields satisfying H\"ormander's commutators condition to a certain order.
 In Section \ref{SEC:examples}
we give examples of our results on the torus $\Tn$, 
and for powers of the Laplacian and sub-Laplacian 
on $\SU2\simeq\mathbb S^3$ and on $\SO3$.
In Section \ref{SEC:traces} we derive a trace formula for left-invariant operators.

The authors would like to thank Jens Wirth for discussions and remarks, and 
Marius M\u{a}ntoiu for a comment.

\section{Preliminaries}  
\label{SEC:Prelim}

In this section we recall some basic facts about the global matrix
quantization on a compact topological/Lie group $G$. 
Let $\widehat{G}$ denote the set of equivalence classes of continuous irreducible unitary 
representations of $G$. Since $G$ is compact, the set $\widehat{G}$ is discrete.  
For $[\xi]\in \widehat{G}$, by choosing a basis in the representation space of $\xi$, we can view 
$\xi$ as a matrix-valued function $\xi:G\rightarrow \ce^{d_{\xi}\times d_{\xi}}$, where 
$d_{\xi}$ is the dimension of the representation space of $\xi$. 
By the Peter-Weyl theorem the collection
$$
\left\{ \sqrt{d_\xi}\,\xi_{ij}: \; 1\leq i,j\leq d_\xi,\; [\xi]\in\Gh \right\}
$$
is the orthonormal basis of $L^2(G)$.
If $f\in L^1(G)$ we define its global Fourier transform at $\xi$ by 
\begin{equation}\label{EQ:FG}
\mathcal F_G f(\xi)\equiv \widehat{f}(\xi):=\int_{G}f(x)\xi(x)^*dx,
\end{equation}
where $dx$ is the normalised Haar measure on $G$. Thus, if $\xi$ is a matrix representation, 
we have $\widehat{f}(\xi)\in\ce^{d_{\xi}\times d_{\xi}} $. The Fourier inversion formula is a consequence
 of the Peter-Weyl theorem, so that we have
\beq \label{EQ:FGsum}
f(x)=\sum\limits_{[\xi]\in \widehat{G}}d_{\xi} \Tr(\xi(x)\widehat{f}(\xi)).
\eq
Given a sequence of matrices $a(\xi)\in\mathbb C^{d_\xi\times d_\xi}$, we can define
\begin{equation}\label{EQ:FGi}
(\mathcal F_G^{-1} a)(x):=\sum\limits_{[\xi]\in \widehat{G}}d_{\xi} \Tr(\xi(x) a(\xi)).
\end{equation}
This series can be interpreted distributionally or absolutely depending on the growth of 
(the Hilbert-Schmidt norms of) $a(\xi)$.
We refer to \cite{rt:book} for further discussion of this background material.
The Parseval identity takes the form 
\begin{equation}\label{EQ:Parseval}
\|f\|_{L^2(G)}= \left(\sum\limits_{[\xi]\in \widehat{G}}d_{\xi}\|\widehat{f}(\xi)\|^2_{\HS}\right)^{1/2},\quad
\textrm{ where }
\|\widehat{f}(\xi)\|^2_{\HS}=\Tr(\widehat{f}(\xi)\widehat{f}(\xi)^*),
\end{equation}
which gives the norm on 
$\ell^2(\widehat{G})$. 

For each $[\xi]\in \widehat{G}$, the matrix elements of $\xi$ are the eigenfunctions for the Laplacian $\mathcal{L}_G$ 
(or the Casimir element of the universal enveloping algebra), with the same eigenvalue which we denote by 
$-\lambda^2_{[\xi]}$, so that we have
\begin{equation}\label{EQ:Lap-lambda}
-\mathcal{L}_G\xi_{ij}(x)=\lambda^2_{[\xi]}\xi_{ij}(x)\qquad\textrm{ for all } 1\leq i,j\leq d_{\xi}.
\end{equation} 
The weight for measuring the decay or growth of Fourier coefficients in this setting is 
$$\jp{\xi}:=(1+\lambda^2_{[\xi]})^{\half},$$ the eigenvalues of the elliptic first-order pseudo-differential operator 
$(I-\mathcal{L}_G)^{\half}$.

For a linear continuous operator $A$ from $C^{\infty}(G)$ to $\mathcal{D}'(G) $ 
we define  its {\em matrix-valued symbol} $\sigma_A(x,\xi)\in\cdxi$ by 
\begin{equation}\label{EQ:A-symbol}
\sigma_A(x,\xi):=\xi(x)^*(A\xi)(x)\in\cdxi,
\end{equation}
where $A\xi(x)\in \cdxi$ is understood as $(A\xi(x))_{ij}=(A\xi_{ij})(x)$, i.e. by 
applying $A$ to each component of the matrix $\xi(x)$.
Then one has (\cite{rt:book}, \cite{rt:groups}) the global quantization
\begin{equation}\label{EQ:A-quant}
Af(x)=\sum\limits_{[\xi]\in \widehat{G}}d_{\xi}\Tr(\xi(x)\sigma_A(x,\xi)\widehat{f}(\xi))
\end{equation}
in the sense of distributions, and the sum is independent of the choice of a representation $\xi$ from each 
equivalence class 
$[\xi]\in \widehat{G}$. If $A$ is a linear continuous operator from $C^{\infty}(G)$ to $C^{\infty}(G)$,
the series \eqref{EQ:A-quant} is absolutely convergent and can be interpreted in the pointwise
sense. We will also write $A=\Op(\sigma_A)$ for the operator $A$ given by
the formula \eqref{EQ:A-quant}. The symbol $\sigma_A$ can be interpreted as a matrix-valued
function on $G\times\widehat{G}$.
We refer to \cite{rt:book, rt:groups} for the consistent development of this quantization
and the corresponding symbolic calculus. 

If the operator $A$ is left-invariant then its symbol
$\sigma_A$ does not depend on $x$. In this case we will often just say that $A$ is invariant.

\section{Schatten classes on $L^2(G)$} 
\label{SEC:nuclearity-2}
In this section we study symbolic criteria for Schatten classes $S_{r}(L^{2}(G))$
in terms of their full matrix-valued symbols, in particular we characterise  
 invariant operators in the Schatten classes $S_{r}(L^{2}(G))$.

We recall that if $A\in\Psi^m(G)$ is a pseudo-differential operators in H\"ormander's class
$\Psi^m(G)$, i.e. if all of its localisations to $\Rn$ are pseudo-differential operators with
symbols in the class $S^m_{1,0}(\Rn)$, then the matrix-symbol of $A$ satisfies
$$\|\sigma_A(x,\xi)\|_{op}\leq C \jp{\xi}^m \qquad \textrm {for all } x\in G,\; [\xi]\in\Gh.$$
Here and everywhere $\|\cdot\|_{op}$ denotes the operator norm of the matrix
multiplication by the matrix $\sigma_A(x,\xi).$
For this fact, see e.g. \cite[Lemma 10.9.1]{rt:book} or \cite{rt:groups}, and for the complete
characterisation of H\"ormander classes $\Psi^m(G)$ in terms of matrix-valued symbols
see also \cite{Ruzhansky-Turunen-Wirth:arxiv}. In particular, this motivates the
appearance of the operator norms of the matrix-valued symbols. However, since $\sigma_{A}$ 
is in general a matrix, other matrix norms become useful as well.

We recall that if $H$ is a complex Hilbert space, 
a linear compact operator $A:H\rightarrow H$ belongs to the Schatten class $S_r(H)$ if 
 $$\sum\limits_{n=1}^{\infty}(s_n(A))^r<\infty,$$ 
where $s_n(A)$ denote the singular values of $A$, i.e. the eigenvalues of $|A|=\sqrt{A^*A}$ 
with multiplicities counted. If $1\leq r<\infty$ 
 the class $S_r(H)$ becomes a Banach space endowed with the norm 
 \[\|A\|_{S_r}=\left(\sum\limits_{n=1}^{\infty}(s_n(A))^r\right)^{\frac{1}{r}}.\]
If $0<r<1$ the identity above only defines a quasi-norm with respect to which $S_r(H)$ is complete. The class $S_2(H)$ and $S_1(H)$ are usually known as the class of Hilbert-Schmidt operators and the trace class, respectively. 

In the case of $r=\infty$ we can put $\|A\|_{S_{\infty}}$ to be the operator norm of the bounded operator
$A:L^{2}(G)\to L^{2}(G)$. In this case it can be easily seen by the Plancherel formula that
for an invariant operator $A$, we have
\begin{equation}\label{EQ:Sinfty}
A\in S_{\infty}(L^{2}(G)) \quad\textrm{ if and only if }\quad \sup_{[\xi]\in\Gh} \|\sigma_{A}(\xi)\|_{op}<\infty,
\end{equation}
see e.g. \cite[Section 10.5]{rt:book}.
So, in the sequel we can assume that $0<r<\infty$.
The following lemma is a consequence of the definition of Schatten classes: 
\begin{lem}\label{lemnucSr2} Let $A:H\to H$ be a linear
compact operator. Let $0<r,t<\infty$. Then $A\in S_r$ if and only if $|A|^{\frac{r}{t}}\in S_t$. Moreover, 
$\|A\|_{S_r}^r=\||A|^{\frac{r}{t}}\|_{S_t}^t$.
\end{lem}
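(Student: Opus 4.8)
The plan is to argue directly from the spectral picture of a compact operator. Since $A$ is compact, by the spectral theorem for the compact self-adjoint operator $|A| = \sqrt{A^*A}$ there is an orthonormal system $(e_n)$ of eigenvectors with eigenvalues $s_n(A) \geq 0$, so that $|A| = \sum_n s_n(A)\, (\cdot,e_n)e_n$. The first step is to observe that $|A|^{r/t}$ is then well-defined by the functional calculus: $|A|^{r/t} = \sum_n s_n(A)^{r/t}\, (\cdot,e_n)e_n$, and in particular it is a positive, self-adjoint, compact operator whose eigenvalues (hence singular values) are exactly $s_n(A)^{r/t}$, with the same multiplicities. That $|A|^{r/t}$ is compact follows because $s_n(A) \to 0$ forces $s_n(A)^{r/t} \to 0$.

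Next I would simply compute. By definition $A \in S_r$ means $\sum_n s_n(A)^r < \infty$. Since $s_n(|A|^{r/t}) = s_n(A)^{r/t}$, we have
\begin{equation*}
\sum_n s_n(|A|^{r/t})^t = \sum_n \left(s_n(A)^{r/t}\right)^t = \sum_n s_n(A)^r,
\end{equation*}
so the left-hand series converges if and only if the right-hand one does; that is, $|A|^{r/t} \in S_t$ if and only if $A \in S_r$. The same identity gives the norm relation: raising $\sum_n s_n(A)^r$ to the power $1/r$ gives $\|A\|_{S_r}$, and $\left(\sum_n s_n(|A|^{r/t})^t\right)^{1/t}$ is, by definition (when $t \geq 1$; in general it is the defining quantity of the quasi-norm), $\||A|^{r/t}\|_{S_t}$, whence $\|A\|_{S_r}^r = \||A|^{r/t}\|_{S_t}^t$. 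Note that $A$ and $|A|$ have the same singular values, since $|\,|A|\,| = |A|$, so there is no discrepancy coming from replacing $A$ by $|A|$.

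There is essentially no hard part here: the only point requiring a word of care is that $|A|^{r/t}$ makes sense as a (compact, positive) operator and has the claimed singular values, which is immediate from the functional calculus applied to the spectral decomposition of $|A|$. One should also note that the statement is symmetric in the roles of the two exponents in the expected way: applying the equivalence with the pair $(t, r)$ in place of $(r, t)$ to the operator $|A|^{r/t}$ recovers $A$ up to the identity $\bigl(|A|^{r/t}\bigr)^{t/r} = |A|$, which is consistent. I would present the argument in the short form above, emphasizing the single identity $s_n(|A|^{r/t}) = s_n(A)^{r/t}$ from which both claims follow.
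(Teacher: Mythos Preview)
Your argument is correct and is exactly the direct verification from the definition of Schatten classes that the paper has in mind; indeed, the paper does not spell out a proof at all but simply remarks that the lemma is ``a consequence of the definition of Schatten classes.'' Your key observation $s_n(|A|^{r/t}) = s_n(A)^{r/t}$ via the functional calculus is precisely what makes this immediate.
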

We will denote by $\|\sigma(x,\xi)\|_{S_r}$ the Schatten-norm of order $r$ of the matrix 
$\sigma(x,\xi)\in\C^{\dxi\times\dxi}$, for $x,\xi$ fixed, viewed as a linear mapping
on $\C^{\dxi}$. 

We start by giving a simple criterion for Hilbert-Schmidt operators.
On general $L^2(\mu)$ spaces the Hilbert-Schmidt operators are characterised by the square integrability 
of the kernel with respect to the product measure $\mu\otimes\mu.$ It is also well known that one can translate 
this characterisation in terms of symbols for pseudo-differential operators on the Euclidean space. We state below 
a characterisation of Hilbert-Schmidt operators in terms of the matrix-valued symbol on compact Lie groups.  
This underlines the natural role played by the matrix-valued symbol on compact Lie groups.
The norm of $L^2(G\times\widehat{G})$ can be defined by 
\[\|\sigma\|_{L^2(G\times\widehat{G})}=\left(\int_{G}\sum_{[\xi]\in\Gh}d_{\xi} \|\sigma(x,\xi)\|_{\HS}^2dx\right)^{\half}.\]
This is a natural norm in view of the Parseval identity \eqref{EQ:Parseval}.

\begin{prop}\label{mainhs} 
Let $G$ be a compact Lie group. 
 Let $A:L^{2}(G)\to L^{2}(G)$ be a linear
continuous operator with matrix-valued symbol $\sigma_A(x,\xi)$.
Then $A$ is a Hilbert-Schmidt operator if and only if $\sigma_A\in L^2(G\times\widehat{G}).$ Moreover, $\|A\|_{\HS}=\|\sigma_A\|_{L^2(G\times\widehat{G})}.$ 

In particular, if $\sigma_A(\xi)$
depends only on $\xi$, then
 $A$ is Hilbert-Schmidt if and only if its symbol $\sigma_A$ satisfies 
\beq\sum_{[\xi]\in\Gh}d_{\xi}\|\sigma_{A}(\xi)\|_{\HS}^2<\infty.\eq
\end{prop}
\begin{proof}
The kernel of $A$ is given by  
\beq K(x,y)=\sum_{[\xi]\in\widehat{G} }d_{\xi} \Tr(\xi(x)\sigma_A(x,\xi)\xi(y)^* ).\label{kernel1}\eq
We have $\|A\|_{\HS}^2=\int_G\int_G |K(x,y)|^2dxdy=\int_G\int_G |K(x,xz^{-1})|^2dxdz.$
From (\ref{kernel1}) we obtain 
\begin{align*}
K(x,xz^{-1})=& \sum_{[\xi]\in\widehat{G} }d_{\xi} \Tr(\xi(x)\sigma_A(x,\xi) \xi(xz^{-1})^* )\\
=& \sum_{[\xi]\in\widehat{G} }d_{\xi} \Tr(\xi(xz^{-1})^*\xi(x)\sigma_A(x,\xi) )\\
=& \sum_{[\xi]\in\widehat{G} }d_{\xi} \Tr(\xi(z)\sigma_A(x,\xi) )\\
=& \mathcal{F}_{G}^{-1}\sigma_A(x,\cdot)(z),
\end{align*}
with $\mathcal{F}_{G}^{-1}$ defined in \eqref{EQ:FGi}.
We have formally
\begin{multline*}
\int_G\int_G |K(x,xz^{-1})|^2dxdz= \int_G\int_G |\mathcal{F}_{G}^{-1}\sigma_A(x,\cdot)(z)|^2dzdx\\
= \int_{G}\sum_{[\xi]\in\Gh}d_{\xi} \|\sigma_A(x,\xi)\|_{\HS}^2dx
= \|\sigma_A\|_{L^2(G\times\widehat{G})}^2.
\end{multline*}
The second equality is obtained from the Parseval identity \eqref{EQ:Parseval}.
\end{proof}

As a consequence of Lemma \ref{lemnucSr2} with $t=2$, and 
Proposition \ref{mainhs} we have:
\begin{cor}\label{mainT1} \label{mainT1a} 
Let $G$ be a compact Lie group. 
 Let $A:L^{2}(G)\to L^{2}(G)$ be a linear
continuous operator with matrix-valued symbol $\sigma_A(x,\xi)$. Let $0<r<\infty $. 
Then $A\in S_r(L^{2}(G))$ if and only if 
$$\sum_{[\xi]\in\Gh}d_{\xi}\int_{G} \|\sigma_{|A|^{\frac{r}{2}}}(x,\xi)\|_{\HS}^2dx<\infty.$$
In particular, for invariant operators, i.e. for operators $A$ with matrix-valued symbols $\sigma_A(\xi)$ depending only on $\xi$, we have that  $A\in S_r(L^{2}(G))$ if and only if 
$$\sum_{[\xi]\in\Gh}d_{\xi}\|\sigma_{|A|^{\frac{r}{2}}}(\xi)\|_{\HS}^2<\infty.$$
\end{cor}
 
With the purpose of characterising invariant operators
belonging to a Schatten class $S_r(L^2(G))$ for $0<r<\infty$ we first establish 
 a simple fact required for the characterisation  in the case of invariant operators:

\begin{lem}\label{lemnucSr} 
Let $A:L^{2}(G)\to L^{2}(G)$ be a linear
continuous operator with matrix-valued symbol $\sigma_A(\xi)$
depending only on $\xi.$ Then, for each $[\xi]\in\Gh$ and $0<r<\infty $ we have
\[\sigma_{|A|^r}(\xi)=|\sigma_{A}(\xi)|^r.\]
\end{lem}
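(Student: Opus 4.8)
The plan is to reduce everything to the representation theory of left-invariant operators, exploiting the fact that such operators act on the Fourier side as block-diagonal matrix multipliers. First I would recall the basic principle (from \cite{rt:book,rt:groups}, or directly from \eqref{EQ:A-quant} with $x$-independent symbol) that for an invariant operator $A=\Op(\sigma_A)$ one has $\widehat{Af}(\xi)=\sigma_A(\xi)\widehat{f}(\xi)$ for every $[\xi]\in\Gh$; that is, under the Peter--Weyl decomposition $L^2(G)\cong\bigoplus_{[\xi]}\C^{d_\xi}\otimes\C^{d_\xi}$, the operator $A$ acts in each isotypic component as (a direct sum of copies of) the matrix $\sigma_A(\xi)$ acting on $\C^{d_\xi}$. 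The key structural observation is that the correspondence $A\mapsto\sigma_A(\xi)$ is, for each fixed $\xi$, an algebra homomorphism from invariant operators to $\C^{d_\xi\times d_\xi}$: it is clearly linear, it sends composition to matrix product (since $\widehat{ABf}(\xi)=\sigma_A(\xi)\sigma_B(\xi)\widehat f(\xi)$), it sends the identity to the identity matrix, and — using that $A$ invariant implies $A^*$ invariant — it satisfies $\sigma_{A^*}(\xi)=\sigma_A(\xi)^*$. These facts are all standard and I would simply cite \cite[Section 10.4--10.5]{rt:book}.

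Next I would handle the positive integer and rational powers first, then pass to the general real exponent. From the homomorphism property, $\sigma_{A^n}(\xi)=\sigma_A(\xi)^n$ for all $n\in\N$, hence $\sigma_{p(A)}(\xi)=p(\sigma_A(\xi))$ for any polynomial $p$, and more generally $\sigma_{A^*A}(\xi)=\sigma_A(\xi)^*\sigma_A(\xi)=|\sigma_A(\xi)|^2$, so that $\sigma_{A^*A}(\xi)$ is a positive semidefinite matrix. For $B:=A^*A\ge 0$ (as an operator on $L^2(G)$, invariant), I want $\sigma_{B^{s}}(\xi)=\sigma_B(\xi)^{s}=|\sigma_A(\xi)|^{2s}$ for all $s>0$; taking $s=r/2$ and noting $|A|^r=(A^*A)^{r/2}=B^{r/2}$ then gives the claim. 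To get the fractional power through the symbol map, the clean route is the spectral theorem combined with continuity: $B$ is a bounded self-adjoint invariant operator whose spectrum is the closure of $\bigcup_{[\xi]}\mathrm{spec}(\sigma_B(\xi))\subset[0,\infty)$; the function $t\mapsto t^{s}$ is continuous on this compact set, so $B^{s}=g(B)$ where $g$ is a uniform limit of polynomials $p_k$ on $\mathrm{spec}(B)$. Since $p_k(B)\to B^{s}$ in operator norm and the symbol map $A\mapsto\sigma_A(\xi)$ is operator-norm continuous (indeed $\|\sigma_A(\xi)\|_{op}\le\|A\|_{op}$, because $\sigma_A(\xi)$ is a compression of $A$ to a finite-dimensional invariant subspace via \eqref{EQ:A-symbol}), we get $\sigma_{B^{s}}(\xi)=\lim_k p_k(\sigma_B(\xi))=g(\sigma_B(\xi))=\sigma_B(\xi)^{s}$, using that $g(t)=t^s$ on $\mathrm{spec}(\sigma_B(\xi))\subseteq\mathrm{spec}(B)$. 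This finishes the proof.

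The main obstacle, and the step deserving the most care, is the passage to non-integer $r$: one must be sure that the continuous-functional-calculus definition of $|A|^r$ as an operator on $L^2(G)$ is compatible with the entrywise matrix functional calculus defining $|\sigma_A(\xi)|^r$, and that the polynomial-approximation argument is legitimate — this requires knowing that $\mathrm{spec}(\sigma_B(\xi))\subseteq\mathrm{spec}(B)$ (immediate from block-diagonality) and that the symbol map is norm-continuous. An alternative, perhaps more elementary, route that avoids functional calculus altogether: since $|A|^r$ is again an invariant operator, diagonalise the normal (in fact self-adjoint positive) matrix $\sigma_A(\xi)^*\sigma_A(\xi)$ directly and check the identity on the finite-dimensional space $\C^{d_\xi}$, using that the decomposition of $L^2(G)$ into $\xi$-isotypic components is $A$-invariant and that on each such component $|A|^r$ restricts to the matrix $|\sigma_A(\xi)|^r$. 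Either way the content is the same: invariance makes the whole problem finite-dimensional, block by block, where the identity $\sigma_{|A|^r}(\xi)=|\sigma_A(\xi)|^r$ is just the statement that operator functional calculus and matrix functional calculus agree.
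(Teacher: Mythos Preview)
Your proof is correct. Both your main route and your alternative route are valid.

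The paper's own argument is essentially your \emph{alternative route}: it observes that $\sigma_{|A|^2}(\xi)=|\sigma_A(\xi)|^2$ from the $*$-homomorphism property, then notes that $\sigma_{|A|}(\xi)$ is self-adjoint (since $|A|$ is self-adjoint and invariant), passes to a representative $\eta\in[\xi]$ in which $\sigma_{|A|}(\eta)$ is diagonal, and reads off $\sigma_{|A|^r}(\eta)=|\sigma_A(\eta)|^r$ from positivity. Your main route via continuous functional calculus --- approximating $t\mapsto t^{r/2}$ uniformly by polynomials on the compact spectrum of $B=A^*A$ and using the operator-norm continuity of $A\mapsto\sigma_A(\xi)$ --- is a slightly different and arguably cleaner packaging: it makes explicit \emph{why} the passage to non-integer $r$ is legitimate (a point the paper leaves implicit), and it yields at once the more general fact that $\sigma_{g(B)}(\xi)=g(\sigma_B(\xi))$ for any continuous $g$ on $\mathrm{spec}(B)$. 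The paper's diagonalisation argument is shorter because it sidesteps the approximation machinery, but it relies on the reader supplying the reason why diagonalising the symbol of $|A|$ is compatible with taking the $r$-th power of the operator $|A|$ --- which is exactly the block-diagonal/finite-dimensional observation you spell out.
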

\begin{proof} Since $A$ is invariant, we have
\[\sigma_{A^*A}(\xi)=\sigma_{A^*}(\xi)\sigma_{A}(\xi),\]
which can be expressed as $\sigma_{|A|^{2}}(\xi)=|\sigma_{A}(\xi)|^{2}.$
In general, since the operator $|A|$ is formally self-adjoint, its symbol $\sigma_{|A|}(\xi)$ is 
self-adjoint for every $\xi$; indeed, we have 
$\sigma_{|A|}(\xi)=\sigma_{|A|^{*}}(\xi)=\sigma_{|A|}(\xi)^{*}$, 
the last equality because of the left-invariance of $|A|$. Consequently, we can diagonalise
$\sigma_{|A|}(\xi)$ by a unitary transformation, which means that we can assume that
$\sigma_{|A|}(\eta)$ is diagonal for some $\eta\in[\xi]$. Consequently, the positivity
of the symbol (of the positive operator $|A|)$ implies that 
$\sigma_{|A|^r}(\eta)=|\sigma_{A}(\eta)|^r$ for any $0<r<\infty$. Going back to the
representation $\xi$ we obtain the statement.
\end{proof}

We can now state the characterisation for invariant operators in $S_r(L^2(G))$. 
We note here that since it is formulated for invariant operators, we do not need the
differential structure on $G$, so that all the arguments in the proof actually work in the
more general setting of compact topological groups.

\begin{thm}\label{charnucSr} 
Let $G$ be a compact topological group and let $0<r<\infty$. 
 Let $A:L^{2}(G)\to L^{2}(G)$ be a linear
compact operator with matrix-valued symbol $\sigma_A(\xi)$
depending only on $\xi.$  Then
$A\in S_r(L^2(G))$ if and only if  
$$\sum_{[\xi]\in\Gh}d_{\xi} \|\sigma_A(\xi)\|_{S_r}^r<\infty.$$ 
\end{thm}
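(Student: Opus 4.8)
The plan is to combine Corollary~\ref{mainT1a} with Lemma~\ref{lemnucSr}. By Corollary~\ref{mainT1a}, applied with the exponent $r$, the operator $A$ lies in $S_r(L^2(G))$ if and only if
\[
\sum_{[\xi]\in\Gh} d_\xi \,\|\sigma_{|A|^{r/2}}(\xi)\|_{\HS}^2 < \infty .
\]
So everything comes down to identifying the Hilbert--Schmidt norm of the matrix $\sigma_{|A|^{r/2}}(\xi)$ with the $S_r$-Schatten norm of the matrix $\sigma_A(\xi)$.

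First I would invoke Lemma~\ref{lemnucSr}, with the exponent $r/2$ in place of $r$, to obtain the pointwise (in $\xi$) identity
\[
\sigma_{|A|^{r/2}}(\xi) = |\sigma_A(\xi)|^{r/2}
\]
for every $[\xi]\in\Gh$. Next I would observe that for a fixed matrix $M=\sigma_A(\xi)\in\C^{\dxi\times\dxi}$, the finite-dimensional version of Lemma~\ref{lemnucSr2} (with $H=\C^{\dxi}$, $t=2$, and the same $r$) gives
\[
\bigl\||M|^{r/2}\bigr\|_{\HS}^2 = \||M|^{r/2}\|_{S_2}^2 = \|M\|_{S_r}^r .
\]
Indeed, if $s_1,\dots,s_{\dxi}$ are the singular values of $M$, then $|M|^{r/2}$ is a positive matrix with eigenvalues $s_j^{r/2}$, so its Hilbert--Schmidt norm squared is $\sum_j s_j^{r}$, which is exactly $\|M\|_{S_r}^r$. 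Substituting this identity into the sum above, the condition from Corollary~\ref{mainT1a} becomes precisely
\[
\sum_{[\xi]\in\Gh} d_\xi \,\|\sigma_A(\xi)\|_{S_r}^r < \infty ,
\]
which is the desired characterisation.

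I expect no serious obstacle: the argument is essentially bookkeeping, chaining two lemmas and one corollary with a careful choice of exponents. The one point that needs a little care is the legitimacy of applying Lemma~\ref{lemnucSr} with exponent $r/2$: the lemma is stated for all $0<r<\infty$, so taking $r/2$ is harmless, but one should note that $|A|^{r/2}$ is a well-defined positive operator (being a power of the positive operator $|A|$) and that it is compact since $A$ is compact, so that Corollary~\ref{mainT1a} genuinely applies to $|A|^{r/2}$. A secondary subtlety is that $\sigma_{|A|^{r/2}}(\xi)$ must be interpreted via the matrix functional calculus consistently at every $\xi$; this is exactly what Lemma~\ref{lemnucSr} guarantees, via diagonalisation of the self-adjoint symbol by a unitary independent of the choice of representative in $[\xi]$. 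Once these points are noted, the equivalence follows immediately.
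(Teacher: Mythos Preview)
Your proof is correct and follows essentially the same route as the paper: invoke Corollary~\ref{mainT1a}, use Lemma~\ref{lemnucSr} to pass from $\sigma_{|A|^{r/2}}(\xi)$ to $|\sigma_A(\xi)|^{r/2}$, and then apply Lemma~\ref{lemnucSr2} at the matrix level to convert the Hilbert--Schmidt norm into the $S_r$-norm. The only cosmetic difference is that the paper splits the last step into two applications of Lemma~\ref{lemnucSr2} (via $S_1$), whereas you do it in one step with the direct singular-value computation; the content is identical.
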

\begin{proof} By Corollary \ref{mainT1a} $A\in S_r(L^2(G))$ if and only if $\sum_{[\xi]\in\Gh}d_{\xi} \|\sigma_{|A|^{\frac r2}}(\xi)\|_{S_2}^2<\infty$. But Lemma \ref{lemnucSr}  gives us $\|\sigma_{|A|^{\frac r2}}(\xi)\|_{S_2}=\||\sigma_{A}(\xi)|^{\frac r2}\|_{S_2}$. From Lemma \ref{lemnucSr2} applied to the matrix-symbol $\sigma_A(\xi)$ for each $\xi$, we obtain $\||\sigma_{A}(\xi)|^{\frac r2}\|_{S_2}^2=\||\sigma_{A}(\xi)|^{r}\|_{S_1}$. Applying Lemma \ref{lemnucSr2} once more gives us $\||\sigma_{A}(\xi)|^{r}\|_{S_1}=\|\sigma_{A}(\xi)\|_{S_r}^r$, concluding the proof.



\end{proof}

\begin{rem} The expression $\sum_{[\xi]\in\Gh}d_{\xi} \|\sigma_A(\xi)\|_{S_r}^r$ appearing
 in the condition of Theorem \ref{charnucSr} characterising invariant operators in 
 Schatten classes comes from the norm 
\begin{equation}\label{EQ:norms-sch}
\|\sigma_{A}\|_{\ell^{r}_{sch}}:=
\left(\sum_{[\xi]\in\Gh}d_{\xi} \|\sigma_A(\xi)\|_{S_r}^r\right)^{1/r}.
\end{equation}
For the analysis of spaces 
with norms $\|\sigma_{A}\|_{\ell^{r}_{sch}}$ we refer
to Hewitt and Ross \cite[Section 31]{Hewitt-Ross:BK-Vol-II} or 
Edwards \cite[Section 2.14]{Edwards:BK}.
Because of the Hausdorff-Young inequality for the Fourier transform in these spaces 
(see Kunze \cite{Kunze:FT-TAMS-1958})
we get the following one-sided criteria for invariant operators:
\end{rem}

\begin{cor}\label{COR:kernel-schatten}
Let $G$ be a compact Lie group and 
let the left-invariant operator $A$ be bounded on $L^{2}(G)$.
Then it is of the form $Af=f*k$ with $k\in \mathcal D'(G)$ such that
$\sup_{[\xi]\in\Gh}\|\widehat{k}(\xi)\|_{op}<\infty$. Moreover, we have the following
properties:
\begin{itemize}
\item[(i)] if $k\in L^{p}(G)$ with $1\leq p\leq 2$, then $A\in S_{p'}(L^{2}(G))$ with 
$\frac1p+\frac{1}{p'}=1;$
\item[(ii)] if $A\in S_{p}(L^{2}(G))$ with $1\leq p\leq 2$, then $k\in L^{p'}(G)$ with 
$\frac1p+\frac{1}{p'}=1.$
\end{itemize}
\end{cor}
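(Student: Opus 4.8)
The plan is to combine the characterisation of invariant operators in Schatten classes from Theorem \ref{charnucSr} with the classical Hausdorff--Young inequality on the compact group $G$, referred to in the preceding remark. First I would recall that since $A$ is left-invariant and bounded on $L^2(G)$, it is a convolution operator $Af=f*k$ for some $k\in\mathcal D'(G)$, and its symbol is $\sigma_A(\xi)=\widehat{k}(\xi)$; boundedness on $L^2$ together with \eqref{EQ:Sinfty} gives $\sup_{[\xi]\in\Gh}\|\widehat{k}(\xi)\|_{op}<\infty$. This establishes the first assertion and sets up the notation for the two parts.

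For part (i), suppose $k\in L^p(G)$ with $1\le p\le 2$. The Hausdorff--Young inequality on compact groups (Kunze \cite{Kunze:FT-TAMS-1958}, or Hewitt--Ross \cite[Section 31]{Hewitt-Ross:BK-Vol-II}) states precisely that the Fourier transform maps $L^p(G)$ into $\ell^{p'}_{sch}(\widehat G)$, i.e.
\[
\left(\sum_{[\xi]\in\Gh}d_\xi\|\widehat{k}(\xi)\|_{S_{p'}}^{p'}\right)^{1/p'}\le \|k\|_{L^p(G)}.
\]
Since $\sigma_A(\xi)=\widehat{k}(\xi)$, the right-hand side of the $S_{p'}$-criterion in Theorem \ref{charnucSr} (with $r=p'$) is finite, and therefore $A\in S_{p'}(L^2(G))$. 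For part (ii), suppose $A\in S_p(L^2(G))$ with $1\le p\le 2$. By Theorem \ref{charnucSr} (now with $r=p$), $\sum_{[\xi]}d_\xi\|\widehat{k}(\xi)\|_{S_p}^p<\infty$, i.e. $\widehat{k}\in\ell^p_{sch}(\widehat G)$. Applying the dual Hausdorff--Young inequality --- the inverse Fourier transform maps $\ell^p_{sch}(\widehat G)$ into $L^{p'}(G)$ --- yields $k\in L^{p'}(G)$, completing the proof.

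The only genuine subtlety, and the step I would be most careful about, is making sure the Hausdorff--Young inequality is invoked in exactly the form that matches the norm $\|\cdot\|_{\ell^r_{sch}}$ defined in \eqref{EQ:norms-sch}: the normalisation of the Haar measure and of the Fourier transform \eqref{EQ:FG}, and the appearance of the weights $d_\xi$ and the matrix Schatten norms $\|\cdot\|_{S_r}$, must all be consistent with the references. One should also note that in part (ii) the a priori hypothesis that $A$ is bounded (hence $k\in\mathcal D'(G)$) is what lets us speak of $\widehat k(\xi)=\sigma_A(\xi)$ at all, so the interpolation endpoints $p=1$ (where $k\in L^1$, $\widehat k\in\ell^\infty_{sch}$) and $p=2$ (Parseval, already in Corollary \ref{cormult}) are consistent with the statement. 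Beyond that, the argument is essentially a translation of a classical harmonic-analysis fact through the symbolic dictionary supplied by Theorem \ref{charnucSr}, so no further obstacle is expected.
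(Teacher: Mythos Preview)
Your argument is correct and matches the paper's proof essentially line for line: the paper likewise reduces to $\sigma_A(\xi)=\widehat{k}(\xi)$, invokes \eqref{EQ:Sinfty} for the boundedness assertion, and then deduces (i) and (ii) by combining Theorem~\ref{charnucSr} with the Hausdorff--Young inequalities between $L^p(G)$ and $\ell^{p'}_{sch}(\widehat G)$. Your additional remarks about normalisation and endpoints are sound and go slightly beyond what the paper spells out, but the route is identical.
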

\begin{proof}
The representation $Af=f*k$ follows from the Schwartz kernel theorem, while the $L^{2}$-boundedness
implies that $\sup_{[\xi]\in\Gh}\|\widehat{k}(\xi)\|_{op}<\infty$ by \eqref{EQ:Sinfty}.
We also have $\sigma_{A}(\xi)=\widehat{k}(\xi).$ Consequently, both properties 
(i) and (ii) follow from Theorem \ref{charnucSr} and
 from the Hausdorff-Young inequalities for the Fourier transform
mapping between $L^{p}(G)$ and spaces with the norm
$$\|\widehat{f}\|_{\ell^{p'}_{sch}}:=
\left(\sum_{[\xi]\in\Gh}d_{\xi} \|\widehat{f}(\xi)\|_{S_{p'}}^{p'}\right)^{1/{p'}},$$
see also \eqref{EQ:norms-sch}.
\end{proof}

The following lemma has been proved in \cite{Dasgupta-Ruzhansky:BSM} and it will be useful here to deduce other consequences.
\begin{lem}\label{lem2} 
Let $G$ be a compact Lie group. Then we have 
$$\sum_{[\xi]\in \widehat{G}}d_{\xi}^2\jp{\xi}^{-s}<\infty
\quad\textrm{ if and only if }\quad s>\dim G.$$ 
\end{lem}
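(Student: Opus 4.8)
\textbf{Proof plan for Lemma \ref{lem2}.}

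The plan is to reduce the statement to a classical Weyl-type asymptotic for the eigenvalue counting function of the Laplacian on $G$. Recall that the eigenvalues of $(I-\mathcal{L}_G)^{1/2}$ are the weights $\jp{\xi}=(1+\lambda^2_{[\xi]})^{1/2}$, and that for each $[\xi]\in\Gh$ the associated eigenspace inside $L^2(G)$ has dimension $d_\xi^2$, by the Peter--Weyl theorem. Hence, if we set
\[
N(t):=\sum_{[\xi]\in\Gh,\ \jp{\xi}\leq t} d_\xi^2,
\]
then $N(t)$ is exactly the number of eigenvalues (with multiplicity) of the first-order elliptic pseudo-differential operator $(I-\mathcal{L}_G)^{1/2}$ that are $\leq t$. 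The sum in question is the Stieltjes integral $\int_1^\infty t^{-s}\,dN(t)$, so everything hinges on the growth rate of $N(t)$.

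First I would invoke the sharp Weyl law: since $(I-\mathcal{L}_G)^{1/2}$ is a positive elliptic pseudo-differential operator of order $1$ on the closed manifold $G$ of dimension $n=\dim G$, the classical Weyl asymptotics give $N(t)\sim c\, t^{n}$ as $t\to\infty$ for some constant $c>0$ depending on $G$; in particular there are constants $0<c_1\leq c_2$ with $c_1 t^n \leq N(t)\leq c_2 t^n$ for all $t\geq 1$. (One may alternatively cite the eigenvalue counting estimates for $\mathcal{L}_G$ directly; the point is only the two-sided polynomial bound.) Then I would perform dyadic decomposition: writing $A_k=\{[\xi]: 2^k\leq \jp{\xi}<2^{k+1}\}$, we have $\sum_{[\xi]\in A_k} d_\xi^2 \leq N(2^{k+1})\leq c_2\, 2^{(k+1)n}$, and on $A_k$ one has $\jp{\xi}^{-s}\leq 2^{-ks}$, so
\[
\sum_{[\xi]\in\Gh} d_\xi^2\jp{\xi}^{-s}
\ \leq\ \jp{1}^{-s}\!\!\!\sum_{\jp{\xi}<2}\!\! d_\xi^2 \ +\ \sum_{k\geq 1} 2^{-ks}\sum_{[\xi]\in A_k} d_\xi^2
\ \leq\ C + c_2\sum_{k\geq 1} 2^{k(n-s)},
\]
which converges precisely when $s>n=\dim G$. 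For the converse, the same dyadic blocks combined with the lower bound $\sum_{[\xi]\in A_k} d_\xi^2 = N(2^{k+1})-N(2^k)\geq c_1 2^{(k+1)n}-c_2 2^{kn}$, which is $\geq c_3 2^{kn}$ for $k$ large (after choosing the dyadic ratio large enough, or grouping several consecutive blocks so that the leading term dominates), together with $\jp{\xi}^{-s}\geq 2^{-(k+1)s}$ on $A_k$, gives $\sum_{[\xi]\in\Gh} d_\xi^2\jp{\xi}^{-s}\geq c_4\sum_{k} 2^{k(n-s)}=\infty$ when $s\leq n$.

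The only genuine subtlety — and the step I would flag as the main obstacle — is securing the \emph{two-sided} bound $N(t)\asymp t^n$, since the upper bound alone suffices for convergence but the divergence half needs a matching lower bound (equivalently, that the eigenvalues do not have enormous gaps that would let the counting function be much smaller than $t^n$ on a sequence of scales). This is exactly what the sharp Weyl law for an elliptic operator on a compact manifold provides, so I would simply cite it (e.g.\ H\"ormander's spectral asymptotics, or the heat-kernel expansion $\Tr e^{t\mathcal{L}_G}\sim c\, t^{-n/2}$ as $t\to 0^+$ followed by a Tauberian theorem). With that in hand, the dyadic estimates above are routine and the equivalence ``$s>\dim G$'' drops out. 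I note that for the applications in Section \ref{SEC:examples} one really only needs this estimate on $\Tn$, $\SU2$ and $\SO3$, where $N(t)\asymp t^n$ can also be checked by hand from the explicit description of $\Gh$ and the dimensions $d_\xi$, so the argument is in any case self-contained in those cases.
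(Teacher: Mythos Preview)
Your argument is correct. The key observation --- that by the Peter--Weyl theorem the eigenspace of $-\mathcal{L}_G$ attached to $[\xi]$ has dimension $d_\xi^2$, so that $N(t)=\sum_{\jp{\xi}\le t}d_\xi^2$ is precisely the spectral counting function of the first-order elliptic operator $(I-\mathcal{L}_G)^{1/2}$ on the $n$-manifold $G$ --- is exactly the right reduction, and Weyl's law $N(t)\sim c\,t^n$ then settles both directions. Your dyadic bookkeeping is fine; for the divergence half the cleanest way to avoid the constant-juggling you flag is to use the genuine asymptotic (same constant $c$ on both sides) so that $N(2^{k+1})-N(2^k)\sim c(2^n-1)2^{kn}$ for large $k$, or alternatively to integrate by parts: $\int_1^T t^{-s}\,dN(t)=T^{-s}N(T)-N(1)+s\int_1^T t^{-s-1}N(t)\,dt$, and the last integral behaves like $\int_1^T t^{n-s-1}\,dt$, diverging for $s\le n$.

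As for the comparison: the paper does not actually supply a proof of this lemma --- it simply records the statement and cites \cite{dr:gevrey} for the argument. Your write-up therefore furnishes a self-contained proof where the paper defers to an external reference. The Weyl-law route you take is the standard one, and is almost certainly what the cited reference does as well; there is no genuinely different idea to contrast here.
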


This Lemma \ref{lem2} and Theorem \ref{charnucSr} yield the following corollary:
\begin{cor}\label{corschr1} 
Let $G$ be a compact Lie group of dimension $n$ and let $A$ be an operator with symbol
$\sigma(\xi)$. Let $0<r<\infty$. Suppose that 
$\|\sigma(\xi)\|_{S_r}\leq Cd_{\xi}^{1/r}\jp{\xi}^{-\frac{s}{r}}$ 
with some $s>n$. Then  $A\in S_r(L^{2}(G))$.
\end{cor}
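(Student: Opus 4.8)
The plan is to combine the Schatten-class characterisation for invariant operators from Theorem~\ref{charnucSr} with the summability criterion of Lemma~\ref{lem2}. First I would recall that by Theorem~\ref{charnucSr} the operator $A$ lies in $S_r(L^2(G))$ precisely when
$$\sum_{[\xi]\in\Gh}d_{\xi}\,\|\sigma(\xi)\|_{S_r}^r<\infty,$$
so it suffices to show that the hypothesis $\|\sigma(\xi)\|_{S_r}\leq Cd_{\xi}^{1/r}\jp{\xi}^{-s/r}$ makes this series converge.

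Second, I would substitute the assumed bound into the series: raising the pointwise estimate to the power $r$ gives $\|\sigma(\xi)\|_{S_r}^r\leq C^r d_{\xi}\jp{\xi}^{-s}$, and hence
$$\sum_{[\xi]\in\Gh}d_{\xi}\,\|\sigma(\xi)\|_{S_r}^r\leq C^r\sum_{[\xi]\in\Gh}d_{\xi}^2\jp{\xi}^{-s}.$$
Third, I would invoke Lemma~\ref{lem2}: since $s>n=\dim G$, the right-hand side is finite, and therefore $A\in S_r(L^2(G))$ by Theorem~\ref{charnucSr}. One should note at the outset that $A$ is compact (indeed in $S_\infty$), which follows from the fact that $\|\sigma(\xi)\|_{op}\leq\|\sigma(\xi)\|_{S_r}\to 0$ as $\jp{\xi}\to\infty$ together with \eqref{EQ:Sinfty} and a standard truncation argument, so that the hypotheses of Theorem~\ref{charnucSr} are met.

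There is essentially no obstacle here: the corollary is a direct bookkeeping consequence of the two cited results, and the only point requiring a word of care is the exponent arithmetic ($d_\xi^{1/r}$ raised to the $r$ contributes one factor $d_\xi$, which combines with the $d_\xi$ already present in the Schatten series to give the $d_\xi^2$ matching Lemma~\ref{lem2}) and the verification of compactness needed to apply Theorem~\ref{charnucSr}. The strictness of the inequality $s>n$ is exactly what Lemma~\ref{lem2} demands, so no extra room is needed.
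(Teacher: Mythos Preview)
Your proof is correct and matches the paper's approach exactly: the paper simply states that the corollary follows from Theorem~\ref{charnucSr} and Lemma~\ref{lem2}, and you have spelled out precisely this combination. Your added remark verifying compactness (so that the hypothesis of Theorem~\ref{charnucSr} is satisfied) is a careful touch that the paper leaves implicit.
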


Before we give several examples of this corollary in Section \ref{SEC:examples},
let us apply it to the Bessel potentials on $G$, also showing the sharpness of the
obtained orders. As before, we denote by $\LapG$ a Laplacian on $G$,
see Stein \cite{Stein:BOOK-topics-Littlewood-Paley} for a general discussion on
Laplacians on compact Lie groups. We recall that  $\LapG$ is a negative-definite
second order bi-invariant elliptic differential operators, the eigenvalues of 
$(I-\LapG)^{1/2}$ are denoted by $\jp{\xi}$, and the symbol of its powers is
$$
\sigma_{(I-\LapG)^{-\alpha/2}}(\xi)=\jp{\xi}^{-\alpha} I_{\dxi},
$$
where $I_{\dxi}\in\C^{\dxi\times\dxi}$ is the identity matrix.
In this case we then readily calculate
$\|\sigma_{(I-\LapG)^{-\alpha/2}}(\xi)\|_{S_r}=d_\xi^{1/r}\jp{\xi}^{-\alpha}.$
In particular, this, together with the following proposition, shows that the orders in 
Corollary \ref{corschr1} are sharp.

\begin{prop}\label{PROP:bessels} 
Let $G$ be a compact Lie group of dimension $n$. 
Then  $(I-\LapG)^{-\alpha/2}$ is in the Schatten class $S_r(L^{2}(G))$, $0<r<\infty$,
if and only if $\alpha r>n$.
\end{prop}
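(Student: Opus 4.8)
The plan is to combine Theorem~\ref{charnucSr} with the computation of the Schatten norm of the symbol of $(I-\LapG)^{-\alpha/2}$ and with the dimensional counting provided by Lemma~\ref{lem2}. First I would recall that $(I-\LapG)^{-\alpha/2}$ is left-invariant (indeed bi-invariant), so its matrix symbol $\sigma_{(I-\LapG)^{-\alpha/2}}(\xi)=\jp{\xi}^{-\alpha}I_{\dxi}$ depends only on $\xi$, and hence $\|\sigma_{(I-\LapG)^{-\alpha/2}}(\xi)\|_{S_r}^r=\jp{\xi}^{-\alpha r}\|I_{\dxi}\|_{S_r}^r=\jp{\xi}^{-\alpha r}d_\xi$, since the $\dxi\times\dxi$ identity matrix has all $\dxi$ singular values equal to $1$. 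Moreover, for $\alpha>0$ the operator is compact (its singular values $\jp{\xi}^{-\alpha}$, each with multiplicity $d_\xi^2$, tend to $0$ because $\jp{\xi}\to\infty$ as $[\xi]$ runs through the discrete infinite set $\Gh$), so Theorem~\ref{charnucSr} applies.

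Next, by Theorem~\ref{charnucSr}, $(I-\LapG)^{-\alpha/2}\in S_r(L^2(G))$ if and only if
\[
\sum_{[\xi]\in\Gh}d_\xi\,\|\sigma_{(I-\LapG)^{-\alpha/2}}(\xi)\|_{S_r}^r
=\sum_{[\xi]\in\Gh}d_\xi\cdot d_\xi\,\jp{\xi}^{-\alpha r}
=\sum_{[\xi]\in\Gh}d_\xi^2\,\jp{\xi}^{-\alpha r}<\infty .
\]
By Lemma~\ref{lem2}, this series converges if and only if $\alpha r>\dim G=n$, which is exactly the claimed equivalence.

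I do not expect any serious obstacle here: the proof is essentially a concatenation of previously established facts. The only point requiring a word of care is the justification that the operator is compact so that the Schatten-class machinery of Theorem~\ref{charnucSr} is legitimately invoked; for $\alpha>0$ this is immediate from $\jp{\xi}\to\infty$, and in fact the divergence of the series $\sum d_\xi^2\jp{\xi}^{-\alpha r}$ for $\alpha r\le n$ also shows that in that range the operator fails to lie in $S_r$ (it is not even in $S_r$ as a densely defined operator), so the "only if" direction is covered as well. One could also phrase the whole argument directly via the eigenvalue list: the singular values of $(I-\LapG)^{-\alpha/2}$ are $\jp{\xi}^{-\alpha}$ with multiplicity $d_\xi^2$, whence $\sum_n s_n^r=\sum_{[\xi]\in\Gh}d_\xi^2\jp{\xi}^{-\alpha r}$, and invoke Lemma~\ref{lem2} — but routing it through Theorem~\ref{charnucSr} keeps the presentation uniform with the rest of the section.
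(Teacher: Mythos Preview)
Your proof is correct and follows essentially the same route as the paper: compute $\|\sigma_{(I-\LapG)^{-\alpha/2}}(\xi)\|_{S_r}^r=d_\xi\jp{\xi}^{-\alpha r}$, plug into Theorem~\ref{charnucSr} to get $\sum_{[\xi]\in\Gh}d_\xi^2\jp{\xi}^{-\alpha r}$, and conclude with Lemma~\ref{lem2}. Your additional remarks on compactness and the alternative direct eigenvalue-counting argument are fine but not needed for the paper's version.
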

\begin{proof}
We have $\|\sigma_{(I-\LapG)^{-\alpha/2}}(\xi)\|_{S_r}=d_\xi^{1/r}\jp{\xi}^{-\alpha}$, so that
$$
\sum_{[\xi]\in\Gh}d_{\xi} \|\sigma_{(I-\LapG)^{-\alpha/2}}(\xi)\|_{S_r}^r=
\sum_{[\xi]\in\Gh}d_{\xi}^2 \jp{\xi}^{-\alpha r},
$$
and Proposition \ref{PROP:bessels} follows from combining the criteria in Theorem
\ref{charnucSr}  and in Lemma \ref{lem2}.
\end{proof}
We note that the statement of Proposition \ref{PROP:bessels} 
can be extended to a more general setting
on compact manifolds, see \cite{Delgado-Ruzhansky:jdam}, as well as \cite{dr:suffkernel} for a comparison,
so the more interesting setting for us here is that of non-elliptic
operators that we address in the next section.

We will now give some consequences regarding Schatten classes on Sobolev spaces. 
The notion of global symbol and the characterisation given by Theorem \ref{charnucSr} of Schatten 
classes for left-invariant operators 
provide a simple proof of the independence with respect to the scale of Sobolev spaces
$H^{s}(G)$ (defined by their localisations being in $H^{s}(\Rn)$). 

\begin{thm}\label{sobolev} 
Let $G$ be a compact Lie group, $0<r<\infty$ and $ s\in\er$. 
 Let $A:H^{s}(G)\to H^{s}(G)$ be a linear
bounded operator with the matrix-valued symbol $\sigma_A(\xi)$
depending only on $\xi.$  Then
\[A\in S_r(H^s(G)) \mbox{ if and only if }  \sum_{[\xi]\in\Gh}d_{\xi} \|\sigma_A(\xi)\|_{S_r}^r<\infty.\]
Consequently, if $ s_1,s_2\in\er$ and $A$ is
a left-invariant operator bounded on 
$A:H^{s_1}(G)\to H^{s_1}(G)$ and $A:H^{s_2}(G)\to H^{s_2}(G)$, then 
\[A\in S_r(H^{s_1}(G)) \mbox{ if and only if }  A\in S_r(H^{s_2}(G)).\]
\end{thm}

\begin{proof} We observe that if $Au=\lambda u$ with $u\in H^s(G)$, writing 
\[u=(I-\cL)^{-\frac{s}{2}}f,\]
with $f\in L^2(G)$ and $\cL$ denoting a Laplacian on $G$, we get
\[A(I-\cL)^{-\frac{s}{2}}f=\lambda (I-\cL)^{-\frac{s}{2}}f.\]
Hence we have
\[\tilde{A}f=\lambda f,\, \textrm{ with } \, \tilde{A}=(I-\cL)^{\frac{s}{2}}A(I-\cL)^{-\frac{s}{2}}.\]
Thus $\tilde{A}:L^2(G)\rightarrow L^2(G)$ is bounded on $L^2(G)$ and
\begin{align*}
\sigma_{\tilde{A}}(\xi)=\sigma_{(I-\cL)^{\frac{s}{2}}}(\xi)\sigma_{A}(\xi)\sigma_{(I-\cL)^{-\frac{s}{2}}}(\xi)
=\sigma_{A}(\xi)
\end{align*}
because all these operators are left-invariant and 
$\sigma_{(I-\cL)^{\frac{s}{2}}}(\xi)=\jp{\xi}^{s} I_{\d_\xi}$ is diagonal.
Therefore, using Theorem \ref{charnucSr}, we obtain
\begin{align*}
A\in S_r(H^s(G))\Longleftrightarrow &\tilde{A}\in S_r(L^2(G))\\
\Longleftrightarrow &\sum_{[\xi]\in\Gh}d_{\xi} \|\sigma_{\tilde{A}}(\xi)\|_{S_r}^r<\infty\\
\Longleftrightarrow & \sum_{[\xi]\in\Gh}d_{\xi} \|\sigma_{{A}}(\xi)\|_{S_r}^r<\infty,
\end{align*}
completing the proof.
\end{proof}




Similar to Proposition \ref{PROP:bessels} we can give a condition for powers of 
sub-Laplacians. Let $G$ be a compact Lie group of dimension $n$ and let  
\[\subll=X_1^2+\cdots +X_k^2\]
be the sum of squares of left-invariant vector fields $X_1,\ldots,X_k$, for which we assume
that the H\"ormander commutator condition is satisfied of order $\varkappa\in\mathbb N$, i.e.
the commutators of length $\varkappa$ span the Lie algebra of $G$.
Since the operator $\subll$ is formally self-adjoint, we can choose the bases in the representation
spaces in such a way that its symbol is diagonal, and we denote
$$\sigma_{I-\subll}(\xi)={\rm diag}\{\nu_{1}^{2}(\xi),\ldots,\nu_{d_{\xi}}^{2}(\xi)\},$$
for some $\nu_{j}(\xi)\geq 0$. Then, following a-priori estimates by Rothschild and
Stein \cite{Rothschild-Stein:AM-1976}, it was shown in 
\cite[Proposition 3.1]{Garetto-Ruzhansky:JDE2} that
there exist constants $c,C>0$ such that
\begin{equation}\label{EQ:sym-subl}
c\jp{\xi}^{1/\varkappa}\leq \nu_{j}(\xi)\leq C\jp{\xi}
\end{equation}
holds for all $[\xi]\in\Gh$ and all $1\leq j\leq d_{\xi}.$

\begin{prop}\label{PROP:sub-Laplacian} 
Let $G$ be a compact Lie group of dimension $n$. 
Let $$\subll=X_1^2+\cdots +X_k^2$$
be the sum of squares of left-invariant vector fields $X_1,\ldots,X_k$ satisfying 
the H\"ormander commutator condition of order $\varkappa$.
Then  $(I-\mathcal L_{sub})^{-\alpha/2}$ is in the Schatten class $S_r(L^{2}(G))$, $0<r<\infty$,
provided that $\alpha r>\varkappa n$.
\end{prop}
\begin{proof}
The statement follows readily from Theorem \ref{charnucSr} using \eqref{EQ:sym-subl}.
Namely, we can write
\begin{equation}\label{EQ:sub-1}
\sum_{[\xi]\in\widehat{G}}d_{\xi} \|\sigma_{(I-\subll)^{-\frac{\alpha}{2}}}(\xi)\|_{S_r}^r
=
\sum_{[\xi]\in\widehat{G}}\sum\limits_{j=1}^{d_{\xi}}d_{\xi}
\nu_{j}(\xi)^{-\alpha r} 
 \leq
C \sum_{[\xi]\in\widehat{G}} d_{\xi}^2 \langle\xi\rangle^{-\frac{\alpha r}{\varkappa}}.
\end{equation}
Therefore,
\eqref{EQ:sub-1} is finite for $\alpha r>\varkappa n$ in view of Lemma
\ref{lem2}.
\end{proof}

For the case of the Laplacian, we have $\varkappa=1$, so that we recover 
the condition in Proposition \ref{PROP:bessels}. For $\varkappa\geq 2$, however,
the condition $\alpha r>\varkappa n$ in Proposition \ref{PROP:sub-Laplacian}
can be improved in a number of cases.
This requires a more careful analysis of the maximal weight lattice and will be addressed 
elsewhere. In Corollary \ref{COR:su2-subLap}, following explicit calculations,
we give its improvement in the case of the group $\SU2$.

Assuming $\varkappa=2$ for simplicity, we
note that the corresponding condition $\alpha r>2n$ in Proposition \ref{PROP:sub-Laplacian}
is related to subelliptic estimates for the sub-Laplacian that we now briefly indicate.
Indeed, since $\subll$ is a sum of squares associated to a system of vector fields satisfying 
the H{\"o}rmander condition of order $\varkappa=2$, we have
\[\|u\|_{H^{1}}\leq C\|(I-\subll)u\|_{L^2},\]
see e.g. Rothschild and Stein \cite{Rothschild-Stein:AM-1976}.
It follows then that 
$\|(I-\subll)^{-1}u\|_{H^1}\leq C\|u\|_{L^2}$, and hence, by interpolation,
\begin{equation}\label{EQ:subell}
\|(I-\subll)^{-s}u\|_{H^{s}}\leq C\|u\|_{L^2},
\end{equation}
for any $s\geq 0$. Before we apply this, 
we observe that on the other hand, if $\delta$ is the delta-distribution at the unit element of the group, we have
\begin{align*} \efeeg((I-\subll)^{-\frac{\beta}{2}}\delta )(\xi)=\sigma_{(I-\subll)^{-\frac{\beta}{2}}}(\xi)\widehat{\delta}(\xi)
={\rm diag}\{\nu_{j}(\xi)^{-\beta}\}I_{d_{\xi}}.
\end{align*}
Taking $\beta=\frac{\alpha r}{2}$, we get
\begin{align*} 
\|\efeeg((I-\subll)^{-\frac{\alpha r}{4}}\delta )(\xi)\|_{\ell^2(\widehat{G})}^2=&
\sum_{[\xi]\in\widehat{G}}d_{\xi}\|\efeeg((I-\subll)^{-\frac{\alpha r}{4}}\delta )(\xi)\|_{\HS}^2\\
=& \sum_{[\xi]\in\widehat{G}}\sum\limits_{j=1}^{d_{\xi}}d_{\xi}
\nu_{j}(\xi)^{-\alpha r} .
\end{align*}
By Plancherel Theorem, and combining this with the equality in 
\eqref{EQ:sub-1}, we get
\[
\sum_{[\xi]\in\widehat{G}}d_{\xi} \|\sigma_{(I-\subll)^{-\frac{\alpha}{2}}}(\xi)\|_{S_r}^r=
\|(I-\subll)^{-\frac{\alpha r}{4}}\delta \|_{L^2({G})}^2.\]
Since the Laplacian and the sub-Laplacian commute, using
\eqref{EQ:subell},
we get
\[\|(I-\subll)^{-\frac{\alpha r}{4}}\delta\|_{L^2}\leq C\|\delta\|_{H^{-\frac{\alpha r}{4}}}<\infty\]
for $\frac{\alpha r}{4}>\frac{n}{2},$ i.e. for $\alpha r>2n$, the same order as in Proposition
\ref{PROP:sub-Laplacian}.
Although the sub-elliptic estimate \eqref{EQ:subell} is sharp,
the possibility of improving the orders for Schatten classes, such as the one
that we obtain in Corollary \ref{COR:su2-subLap}, can be explained
by the fact that we only need to apply \eqref{EQ:subell} to the delta-distribution, in which case the
Sobolev order can be actually better.

\section{Examples on $\Tn$, $\SU2\simeq\mathbb S^3$ and $\SO3$}
\label{SEC:examples}

We now give some examples of our results on the torus $\Tn$, $\SU2$ and $\SO3$. 
In particular, this shows that the notion of the matrix-valued symbol
becomes instrumental and can be used as a tool for deriving properties of 
operators defined intrinsically on the group.

\subsection{The torus  $\Tn$}
We start with a few simple observations in the case of the torus.
If $G=\Tn=\Rn/ \Zn$, we have $\widehat{\Tn}\simeq \Zn$, and the collection $\{ \xi_k(x)=e^{2\pi i x\cdot k}\}_{k\in\Zn}$ 
is the orthonormal basis of $L^2(\Tn)$, and all $d_{\xi_k}=1$. 
If an operator $A$ is invariant on $\Tn$, its symbol becomes
$\sigma_A(\xi_k)=\xi_k(x)^* A\xi_k(x)=A\xi_k(0).$ 
In general, on the torus we will often simplify the notation by identifying $\widehat{\Tn}$ with $\Zn$,
and thus writing $\xi\in\Zn$ instead of $\xi_k\in\Zn$. The toroidal quantization 
\begin{equation}\label{EQ:A-torus}
Af(x)=\sum\limits_{\xi\in\Zn} e^{2\pi i x\cdot \xi} \sigma_A(x,\xi)\widehat{f}(\xi)
\end{equation}
has been analysed extensively in \cite{Ruzhansky-Turunen-JFAA-torus} and it 
is a special case of \eqref{EQ:A-quant}, where we have identified, as noted,
$\widehat{\Tn}$ with $\Zn$.
As a consequence of Theorem \ref{charnucSr} and Corollary \ref{corschr1} on the torus, we obtain:
\begin{cor}\label{cor125} 
Let $A:L^{2}(\tn)\to L^{2}(\tn)$ be a linear
continuous operator with symbol $\sigma_A(\xi)$
depending only on $\xi.$ Let $0<r<\infty$, then
 $A$ belongs to $S_r(L^{2}(\tn))$ if and only if its symbol $\sigma_A$ satisfies 
\beq\label{EQ:cond-torus}
\sum_{\xi\in\zn} |\sigma_A(\xi)|^r<\infty.
\eq
In particular,
 $A$ belongs to $S_r(L^{2}(\tn))$ provided that 
\beq\label{EQ:cond-torus2}
|\sigma_A(\xi)|\leq C\jp{\xi}^{-\frac{s}{r}},
\eq for some $s>n$.
\end{cor}

\begin{rem} Corollary \ref{cor125} implies that a necessary condition for nuclearity ($r=1$) 
on $L^{2}(\tn)$ for operators with symbol only depending on $\xi$  
is the continuity of the corresponding kernel. Indeed, since 
\[K(x,y)=\sum_{\xi\in\zn}e^{i(x-y)\xi}\sigma_A(\xi),\]
the continuity of $K$ follows from the fact  that $\sigma_A\in \ell^1(\zn)$.  
An analogue of this property on general $G$ was given in 
Corollary \ref{COR:kernel-schatten}.
\end{rem}

The result concerning the Bessel potentials in Proposition
\ref{PROP:bessels} in the case of $S_r(L^2(\Tn))$ becomes as follows:

\begin{prop}\label{PROP:torus}
Let $\Delta$ be the Laplacian on the torus $\Tn$ and let $0<r<\infty$. Then
$(I-\Delta)^{-\frac{\alpha}{2}}$ belongs to  $S_r(L^2(\Tn))$ if and only if $\alpha r>n.$
\end{prop}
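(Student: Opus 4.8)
The plan is to recognise Proposition \ref{PROP:torus} as the special case $G=\Tn$ of Proposition \ref{PROP:bessels} and to spell out the short direct argument. First I would note that $(I-\Delta)^{-\alpha/2}$ is a left-invariant (indeed bi-invariant) operator on $\Tn$, so by the discussion preceding Corollary \ref{cor125} its symbol depends only on $\xi$; since all $d_{\xi}=1$ on the torus, the symbol is the scalar
\[
\sigma_{(I-\Delta)^{-\alpha/2}}(\xi)=\jp{\xi}^{-\alpha},\qquad \xi\in\Zn,
\]
where $\jp{\xi}=(1+4\pi^{2}|\xi|^{2})^{1/2}$ is the eigenvalue of $(I-\Delta)^{1/2}$ on the character $e^{2\pi i x\cdot\xi}$ (this is the content of the symbol formula for powers of $\LapG$ recalled before Proposition \ref{PROP:bessels}, specialised to $\Tn$).

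Next I would apply Corollary \ref{cor125} to this symbol: $(I-\Delta)^{-\alpha/2}\in S_{r}(L^{2}(\Tn))$ if and only if
\[
\sum_{\xi\in\Zn}\bigl|\sigma_{(I-\Delta)^{-\alpha/2}}(\xi)\bigr|^{r}=\sum_{\xi\in\Zn}\jp{\xi}^{-\alpha r}<\infty .
\]
Finally, to decide when this lattice sum converges I would invoke Lemma \ref{lem2} with $G=\Tn$: there $d_{\xi}=1$ and $\dim\Tn=n$, so $\sum_{\xi\in\Zn}\jp{\xi}^{-s}=\sum_{\xi\in\Zn}d_{\xi}^{2}\jp{\xi}^{-s}$ is finite precisely when $s>n$. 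Applying this with $s=\alpha r$ yields $(I-\Delta)^{-\alpha/2}\in S_{r}(L^{2}(\Tn))$ if and only if $\alpha r>n$, which is the assertion. (Equivalently, one can compare the series with the integral $\int_{\Rn}(1+|x|^{2})^{-\alpha r/2}\,dx$, which converges iff $\alpha r>n$, the constant $4\pi^{2}$ being irrelevant for convergence.)

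There is no genuine obstacle here: the proposition is an immediate consequence of the already-established Corollary \ref{cor125} (itself a specialisation of Theorem \ref{charnucSr}) together with Lemma \ref{lem2}. The only minor point to keep track of is that on $\Tn$ the weight $\jp{\xi}$ arising from the Laplacian is comparable to the Euclidean weight $(1+|\xi|^{2})^{1/2}$, so that summability of $\jp{\xi}^{-\alpha r}$ over $\Zn$ is governed by the exponent $\alpha r$ versus $n=\dim\Tn$ exactly as in the classical lattice-sum estimate; this comparability is what makes the invocation of Lemma \ref{lem2} legitimate.
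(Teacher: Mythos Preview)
Your argument is correct. It differs from the paper's only in the justification of the key reduction: the paper gives a ``direct simple proof'' that does not cite Corollary~\ref{cor125} or Lemma~\ref{lem2}, but instead observes that $T=(I-\Delta)^{-\alpha/2}$ is a positive multiplier, so $|T|=T$ and its singular values are exactly the symbol values $\jp{\xi}^{-\alpha}$; the Schatten condition is then read off from the definition. You instead invoke the already-established machinery (Corollary~\ref{cor125}, equivalently the $G=\Tn$ case of Proposition~\ref{PROP:bessels}, together with Lemma~\ref{lem2}). Both routes land on the same series $\sum_{\xi\in\Zn}\jp{\xi}^{-\alpha r}$ and the same convergence criterion $\alpha r>n$; the paper's version is marginally more self-contained, while yours makes explicit that nothing new is happening beyond what was proved for general $G$.
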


\begin{proof} 
We give a direct simple proof of this.
The symbol of the operator $T=(I-\Delta)^{-\frac{\alpha}{2}}$ is positive, 
hence $T$ being a multiplier operator, it is positive definite and 
 $|T|=\sqrt{T^*T}=T$. Thus, the singular values of $T$ agree with the values of its symbol 
 $ \jp{\xi}^{-\alpha}$. Therefore, $T\in S_r(L^2(\Tn)$ if and only if $\alpha r>n.$
\end{proof}

\subsection{The groups $\SU2\simeq\mathbb S^3$ and $\SO3$}
We now consider the case of the non\-commutative group $G=\SU2$, the group
of the unitary $2\times 2$ matrices of determinant one. 
The same results as given below hold for the 3-sphere $\mathbb S^{3}$ if we use
the identification $\SU2\simeq\mathbb S^{3}$, with the matrix multiplication in 
$\SU2$ corresponding to the quaternionic product on $\mathbb S^{3}$,
with the corresponding identification of the symbolic calculus, see
\cite[Section 12.5]{rt:book}. Consequently, the results below extend to any simply-connected
closed 3-manifold in view of the resolved Poincar\'e conjecture, see the discussion in
the introduction. More generally, they can be extended to general closed manifolds, see \cite{Delgado-Ruzhansky:jdam}.

The details of the global quantization
\eqref{EQ:A-quant} on $\SU2$ have been worked out in
\cite[Chapter 12]{rt:book}, to which we 
also refer for the details on the representation theory of the group $G=\SU2$.
In this case, we can enumerate the elements of its dual as $\widehat{G}\simeq \frac12 \N_0$, 
with $\N_0=\{0\}\cup\mathbb N$, so that
$$
\widehat{\SU2}=\{ [t^\ell]: t^\ell\in\C^{(2\ell+1)\times (2\ell+1)},  \ell\in \frac12 \N_0\}.
$$
The dimension of each $t^\ell$ is $d_{t^\ell}=2\ell+1$, and there are explicit formulae for $t^\ell$ 
as functions of
Euler angles in terms of the so-called Legendre-Jacobi polynomials, see
\cite[Chapter 11]{rt:book}. The Laplacian on $\SU2$ has eigenvalues
$\lambda_{t^\ell}^2=\ell(\ell+1)$, so that we have $\jp{t^\ell}\approx \ell$. 
If $A:L^{2}(\SU2)\to L^{2}(\SU2)$ is a continuous linear  operator, its matrix-symbol is denfined by  
$$
\sigma_A(x,\ell)\equiv \sigma_A(x,t^\ell):=t^\ell(x)^* At^\ell(x),\quad \ell\in\frac12\N_0.
$$
Corollary \ref{corschr1}  in this case becomes:

\begin{cor}\label{COR:SU2} 
Let $A:L^{2}(\SU2)\to L^{2}(\SU2)$ be an invariant operator with matrix symbol  
$\sigma_A(\ell)$. Let $s>3$ and let $0<r<\infty$. If there is a constant $C>0$ such that 
$$
\|\sigma_A(\ell)\|_{S_r}\leq C \ell^{\frac{1-s}{r}}
$$ 
for all $\ell\in\frac12\N$, then 
$A\in S_r(L^{2}(\SU2))$.
\end{cor}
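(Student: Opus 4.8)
The plan is to derive Corollary \ref{COR:SU2} as a direct specialisation of Corollary \ref{corschr1} to the group $G=\SU2$, using only the known representation-theoretic data recalled just above the statement. The key facts I would invoke are: $\dim\SU2 = 3$, the enumeration $\widehat{\SU2}\simeq\frac12\N_0$ with $d_{t^\ell}=2\ell+1$, the Laplacian eigenvalues $\lambda_{t^\ell}^2=\ell(\ell+1)$, and the consequent two-sided estimate $\jp{t^\ell}=(1+\ell(\ell+1))^{1/2}\approx\ell$ for $\ell\geq 1$ (with the trivial representation $\ell=0$ contributing a single harmless term).

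First I would observe that with $n=\dim\SU2 = 3$, Corollary \ref{corschr1} says: if $\|\sigma_A(\ell)\|_{S_r}\leq C\, d_{t^\ell}^{1/r}\jp{t^\ell}^{-s/r}$ for some $s>3$, then $A\in S_r(L^2(\SU2))$. So it suffices to show that the hypothesis of Corollary \ref{COR:SU2}, namely $\|\sigma_A(\ell)\|_{S_r}\leq C\ell^{(1-s)/r}$ for $\ell\in\frac12\N$, implies an estimate of the form $\|\sigma_A(\ell)\|_{S_r}\leq C'\, (2\ell+1)^{1/r}\jp{t^\ell}^{-s'/r}$ for some (possibly slightly smaller) $s'>3$. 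Here I would exploit the elementary comparison $(2\ell+1)^{1/r}\jp{t^\ell}^{-s'/r}\approx \ell^{1/r}\ell^{-s'/r} = \ell^{(1-s')/r}$ for $\ell\geq 1$, so that the target estimate and the hypothesis are of exactly the same shape with exponent $1-s'$ versus $1-s$; taking $s'=s$ (or any $s'\in(3,s]$) works, and the finitely many small values of $\ell$ (only $\ell\in\{0,\frac12\}$ fall below $1$) are absorbed into the constant. One then simply plugs into Corollary \ref{corschr1} to conclude $A\in S_r(L^2(\SU2))$.

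Concretely, the steps in order are: (1) record $n=3$ and the dual data for $\SU2$; (2) note the equivalence $\jp{t^\ell}\approx\ell$ and $d_{t^\ell}=2\ell+1\approx\ell$ for $\ell\geq 1$; (3) rewrite the hypothesis $\|\sigma_A(\ell)\|_{S_r}\leq C\ell^{(1-s)/r}$ as $\|\sigma_A(\ell)\|_{S_r}\leq C' d_{t^\ell}^{1/r}\jp{t^\ell}^{-s/r}$ using these equivalences, checking the low-$\ell$ cases separately; (4) apply Corollary \ref{corschr1} with the given $s>3$ to obtain $A\in S_r(L^2(\SU2))$. Alternatively, and perhaps more transparently, one can bypass Corollary \ref{corschr1} and argue directly: by Theorem \ref{charnucSr}, $A\in S_r$ iff $\sum_{\ell\in\frac12\N_0}(2\ell+1)\|\sigma_A(\ell)\|_{S_r}^r<\infty$, and the hypothesis bounds this sum by $C^r\sum_{\ell}(2\ell+1)\ell^{1-s}\leq C''\sum_{\ell\geq 1}\ell^{2-s}$, which converges precisely because $s>3$.

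I do not expect a genuine obstacle here — the result is a routine corollary. The only point requiring a modicum of care is the bookkeeping at small $\ell$ (the values $\ell=0$ and $\ell=\frac12$, where $\jp{t^\ell}\approx\ell$ fails since $\ell$ can vanish or be small), which is handled by noting these contribute only finitely many finite terms to the relevant sum. The substantive content — that the critical exponent is governed by $\dim\SU2=3$ through the counting $\sum_\ell \ell^{2-s}<\infty\iff s>3$ — is already packaged in Lemma \ref{lem2} and Corollary \ref{corschr1}, so the proof is essentially a translation of notation.
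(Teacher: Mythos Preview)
Your proposal is correct and follows the same approach as the paper: the statement is presented there simply as the specialisation of Corollary \ref{corschr1} to $G=\SU2$ using $n=3$, $d_{t^\ell}=2\ell+1$, and $\jp{t^\ell}\approx\ell$. Your alternative direct computation via Theorem \ref{charnucSr} is just the proof of Corollary \ref{corschr1} (which combines Theorem \ref{charnucSr} with Lemma \ref{lem2}) unpacked in this special case, so it is not a genuinely different route.
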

We now discuss examples of two operators with diagonal symbols, the Laplacian and the
sub-Laplacian.

If $\lapsu2$ denotes the Laplacian on $\SU2$, 
we have $\lapsu2 t^{\ell}_{mn}(x)=-\ell(\ell+1) t^{\ell}_{mn}(x)$ for all $\ell,m,n$ and $x\in G$,
so that the symbol of $I-\lapsu2$ is given by
$$\sigma_{I-\lapsu2}(x,\ell)=(1+\ell(\ell+1)) I_{2\ell+1},$$ 
where
$I_{2\ell+1}\in \C^{(2\ell+1)\times (2\ell+1)}$ is the identity matrix.
 Hence, $\sigma_{I-\lapsu2}(x,\ell)$ is diagonal and independent of $x$. 
 Consequently, for $(I-\lapsu2)^{-\frac\alpha{2}}$ we have 
  $\|\sigma_{(I-\lapsu2)^{-\frac\alpha{2}}}\|_{S_r}\approx \ell^{-\alpha}\ell^{\frac{1}{r}}$. 
 Therefore, by Corollary \ref{COR:SU2},  
   $(I-\lapsu2)^{-\frac\alpha{2}}$ is in $S_r(L^2(\SU2))$ provided that 
$\ell^{-\alpha}\leq C \ell^{-\frac{s}{r}}$ for $s>3$, which agrees 
with Proposition \ref{PROP:bessels}:

\begin{cor}\label{COR:su2-Lap}
Let $0<r<\infty$. Then the operator
$(I-\lapsu2)^{-\frac{\alpha}{2}}$ is in $S_r(L^2(\SU2))$ if and only if
$\alpha> \frac{3}{r}$. 
\end{cor}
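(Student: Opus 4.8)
The plan is simply to read off the matrix-symbol of $(I-\lapsu2)^{-\alpha/2}$ and feed it into the criteria already established, noting that this corollary is really the $\SU2$-instance of the Bessel-potential computation of Proposition \ref{PROP:bessels} with $n=\dim\SU2=3$. First I would recall, as computed just before the statement, that $\sigma_{I-\lapsu2}(x,\ell)=(1+\ell(\ell+1))\,I_{2\ell+1}$ is a scalar matrix independent of $x$, hence $\sigma_{(I-\lapsu2)^{-\alpha/2}}(\ell)=(1+\ell(\ell+1))^{-\alpha/2}\,I_{2\ell+1}$; being a positive multiple of the $(2\ell+1)\times(2\ell+1)$ identity, its order-$r$ Schatten norm equals $(2\ell+1)^{1/r}(1+\ell(\ell+1))^{-\alpha/2}$, which by $\jp{t^\ell}\approx\ell$ is comparable to $\ell^{1/r}\ell^{-\alpha}=\ell^{(1-\alpha r)/r}$ for $\ell\geq\frac12$.

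For the ``if'' direction I would invoke Corollary \ref{COR:SU2}: assuming $\alpha>3/r$, i.e. $\alpha r>3$, I pick any $s$ with $3<s\le\alpha r$, so that $\|\sigma_{(I-\lapsu2)^{-\alpha/2}}(\ell)\|_{S_r}\approx\ell^{(1-\alpha r)/r}\le C\,\ell^{(1-s)/r}$ for all $\ell\in\frac12\N$, whence $(I-\lapsu2)^{-\alpha/2}\in S_r(L^2(\SU2))$. For the ``only if'' direction I would use Theorem \ref{charnucSr} (or, equivalently, Proposition \ref{PROP:bessels} directly): membership in $S_r$ forces $\sum_{\ell\in\frac12\N_0}(2\ell+1)(1+\ell(\ell+1))^{-\alpha r/2}<\infty$, and after splitting off the $\ell=0$ and $\ell=\frac12$ terms this sum is comparable to $\sum_{\ell\ge1}\ell^{2-\alpha r}$, which converges precisely when $2-\alpha r<-1$, i.e. $\alpha r>3$; so the condition $\alpha>3/r$ is necessary as well, and the two directions combine to give the stated equivalence.

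I do not expect any genuine obstacle here. The only points that need a little care are: replacing $\jp{t^\ell}$ by $\ell$ up to multiplicative constants (harmless inside a Schatten-type sum), keeping in mind that the index $\ell$ ranges over $\frac12\N_0$ rather than $\N$ (which does not affect the convergence of $\sum\ell^{2-\alpha r}$), and setting up the comparison $\sum_{\ell}(2\ell+1)(1+\ell(\ell+1))^{-\alpha r/2}\approx\sum_{\ell\ge1}\ell^{2-\alpha r}$ with the low-order terms split off so that the power-series test applies cleanly. Everything else is bookkeeping with the already-proved symbolic criteria.
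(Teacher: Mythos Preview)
Your approach is essentially the same as the paper's: it uses Corollary~\ref{COR:SU2} for the ``if'' direction and Proposition~\ref{PROP:bessels} (equivalently, the direct series test coming from Theorem~\ref{charnucSr}) for the converse. One small slip: in the ``only if'' direction you dropped a factor of $d_\xi=2\ell+1$. By Theorem~\ref{charnucSr} the relevant series is $\sum_{\ell}d_{t^\ell}\,\|\sigma(\ell)\|_{S_r}^r=\sum_{\ell}(2\ell+1)^{2}(1+\ell(\ell+1))^{-\alpha r/2}$, not $\sum_{\ell}(2\ell+1)(1+\ell(\ell+1))^{-\alpha r/2}$; your asymptotic $\ell^{2-\alpha r}$ and the conclusion $\alpha r>3$ are correct for the squared factor, so this is only a typo and does not affect the argument.
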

 
To give a slightly different example, 
we shall now consider the group $\SO3$ of the $3\times 3$ real orthogonal matrices 
of determinant one. For the details of the representation theory and the global 
quantization of $\SO3$ we refer the reader to \cite[Chapter 12]{rt:book}.
 The dual in this case can be identified as $\widehat{G}\simeq \ene_0$, so that
$$
\widehat{\SO3}=\{ [t^\ell]: t^\ell\in\C^{(2\ell+1)\times (2\ell+1)},  \ell\in \N_0\}.
$$
The dimension of each $t^\ell$ is $d_{t^\ell}=2\ell+1$. The Laplacian on $\SO3$ has eigenvalues
$\lambda_{t^\ell}^2=\ell(\ell+1)$, so that we have $\jp{t^\ell}\approx \ell$.
By the same argument as above, Corollary \ref{COR:su2-Lap}
also holds for the Laplacian on $\SO3$.

Let us fix three invariant vector fields $D_1, D_2, D_3$ on $\SO3$ 
corresponding to the derivatives with
respect to the Euler angles. We refer to \cite[Chapter 11]{rt:book} for the explicit formulae for these.
However, for our purposes here we note that the sub-Laplacian $\mathcal L_{sub}=D_1^2+D_2^2$,
with an appropriate choice of basis in the representation spaces, has the diagonal symbol given by
\begin{equation}\label{EQ:SO3-subLap}
\sigma_{\mathcal L_{sub}}(\ell)_{mn}=(m^2-\ell(\ell+1))\delta_{mn},\quad m,n\in\mathbb Z,\;
-\ell\leq m,n\leq\ell,
\end{equation}
where $\delta_{mn}$ is the Kronecker delta, and where it is customary to let $m,n$ run from
$-\ell$ to $\ell$ rather than from $0$ to $2\ell+1$.
The operator $\mathcal L_{sub}$ is a second order hypoelliptic operator and we can define the
powers $(I-\mathcal L_{sub})^{-\alpha/2}$. These are pseudo-differential operators with symbols
$$\sigma_{(I-\mathcal L_{sub})^{-\alpha/2}}(\ell)_{mn}=(1+\ell(\ell+1)-m^2)^{-\alpha/2}\delta_{mn},$$
with $m,n\in\mathbb Z,\;
-\ell\leq m,n\leq\ell$.
We now have 
$$
\|\sigma_{(I-\mathcal L_{sub})^{-\alpha/2}}(\ell)\|_{S_r}=
\left(\Tr (\sigma_{(I-\mathcal L_{sub})^{-\alpha/2}}(\ell))^r\right)^{\frac 1r}=
\left(\sum_{m=-\ell}^\ell\left(1+\ell(\ell+1) - m^2\right)^{-\frac{\alpha r}{2}}\right)^{\frac 1r},
$$
where $\ell\in\N_0$. Comparing with the integral 
$$
\int_{-R}^R(1+R^2-x^2)^{-\frac{\alpha r}{2}}dx\approx C R^{-\frac{\alpha r}{2}}
\int_0^R(1+R-x)^{-\frac{\alpha r}{2}}dx\approx C R^{-\frac{\alpha r}{2}},
$$
for $\alpha r>2$ and large $R$,
it follows that $\sum_{m=-\ell}^\ell\left(1+\ell(\ell+1) - m^2\right)^{-\frac{\alpha r}{2}}$ 
is of order $\ell^{-\frac{\alpha r}{2}}$. Hence, $\|\sigma_{(I-\mathcal L_{sub})^{-\alpha/2}}(\ell)\|_{S_r}$
is of order $\ell^{-\frac{\alpha}{2}}$. Therefore, we have
 $$
 \sum_{[\xi]\in\widehat{\SO3}}d_{\xi} \|\sigma_{(I-\mathcal L_{sub})^{-\alpha/2}}(\xi)\|_{S_r}^r\approx
 C \sum_{\ell\in\N} \ell^{1-\frac{\alpha r}{2}},
 $$
and as a consequence of Theorem \ref{charnucSr}, we obtain

\begin{cor}\label{COR:su2-subLap}
Let $0<r<\infty$. Then the operator
$(I-\mathcal L_{sub})^{-\frac{\alpha}{2}}$ belongs to the Schatten class $S_r(L^2(\SO3))$ if and only if
$\alpha> \frac 4r$.
The same conclusion holds if we replace $\SO3$ by 
$\SU2$ or by $\mathbb S^3$ (with a quaternionic sub-Laplacian).
\end{cor}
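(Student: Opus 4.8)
\emph{Proof strategy.} The plan is to reduce the statement to the criterion of Theorem~\ref{charnucSr} for invariant operators and then estimate the resulting double series. On $\SO3$ the operator $(I-\mathcal{L}_{sub})^{-\alpha/2}$ is invariant with the diagonal, positive-definite symbol recorded after \eqref{EQ:SO3-subLap},
\[
\sigma_{(I-\mathcal L_{sub})^{-\alpha/2}}(\ell)_{mn}=\left(1+\ell(\ell+1)-m^2\right)^{-\alpha/2}\delta_{mn},\qquad -\ell\le m,n\le\ell,
\]
so that $\|\sigma_{(I-\mathcal L_{sub})^{-\alpha/2}}(\ell)\|_{S_r}^r=\sum_{m=-\ell}^\ell(1+\ell(\ell+1)-m^2)^{-\alpha r/2}$ and, by Theorem~\ref{charnucSr}, the operator lies in $S_r(L^2(\SO3))$ if and only if
\[
\sum_{\ell\in\N_0}(2\ell+1)\sum_{m=-\ell}^\ell\left(1+\ell(\ell+1)-m^2\right)^{-\frac{\alpha r}{2}}<\infty .
\]

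The core of the argument is a two-sided estimate of the inner sum $I_\ell:=\sum_{m=-\ell}^\ell(1+\ell(\ell+1)-m^2)^{-\alpha r/2}$. For the lower bound I would keep only the two terms $m=\pm\ell$, where $1+\ell(\ell+1)-\ell^2=1+\ell$, giving $I_\ell\ge 2(1+\ell)^{-\alpha r/2}$; this already yields $\sum_\ell(2\ell+1)I_\ell\ge c\sum_\ell\ell^{1-\alpha r/2}$, which diverges whenever $\alpha r\le 4$ and thus disposes of the ``only if'' direction for all $\alpha,r>0$. For the ``if'' direction, assume $\alpha r>4$ (so in particular $\alpha r>2$). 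Using $1+\ell(\ell+1)-m^2=1+\ell+(\ell-|m|)(\ell+|m|)$, the summand is of size $\approx\ell^2$ for $|m|\le\ell/2$, while for $\ell/2\le|m|\le\ell$ one checks $1+\ell(\ell+1)-m^2\ge\ell\,(1+\ell-|m|)$. Splitting $I_\ell$ at $|m|=\ell/2$: the central block contributes $O(\ell\cdot\ell^{-\alpha r})=O(\ell^{-\alpha r/2})$ (here $\alpha r\ge2$ is used), and each endpoint block is bounded, after the substitution $k=\ell-|m|$ and comparison with $\sum_{k\ge0}(1+k)^{-\alpha r/2}<\infty$ (equivalently with $\int_0^\ell(1+\ell-x)^{-\alpha r/2}\,dx$), by $C\,\ell^{-\alpha r/2}$. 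Hence $I_\ell\le C\ell^{-\alpha r/2}$, and $\sum_\ell(2\ell+1)I_\ell\le C'\sum_\ell\ell^{1-\alpha r/2}<\infty$ exactly when $\alpha r>4$, i.e. $\alpha>\frac{4}{r}$.

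For $\SU2$ (and hence, via the identification $\SU2\simeq\mathbb S^3$ and the stability of the symbolic calculus under group isomorphisms recalled in the introduction, for $\mathbb S^3$ with the quaternionic sub-Laplacian) the sub-Laplacian has, in a suitable basis of the representation spaces $t^\ell$, a diagonal symbol of exactly the same shape $(1+\ell(\ell+1)-m^2)^{-\alpha/2}\delta_{mn}$; the only difference is that $\ell$ now ranges over $\frac12\N_0$ instead of $\N_0$, and since $d_{t^\ell}=2\ell+1$ and $\jp{t^\ell}\approx\ell$ in both cases, the identical computation gives the same threshold $\alpha>\frac{4}{r}$.

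The step I expect to be the main obstacle is the upper bound $I_\ell\le C\ell^{-\alpha r/2}$: one must track carefully how the summand degenerates as $|m|\to\ell$, where the base $1+\ell(\ell+1)-m^2$ is only of size $\ell$ rather than $\ell^2$, and justify the passage from the sum to the comparison integral uniformly in $\ell$ (using that $m\mapsto(1+\ell(\ell+1)-m^2)^{-\alpha r/2}$ is monotone in $|m|$ on $\{0,\dots,\ell\}$). Everything else is routine bookkeeping around Theorem~\ref{charnucSr}.
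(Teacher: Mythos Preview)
Your proposal is correct and follows essentially the same route as the paper: reduce via Theorem~\ref{charnucSr} to the double series $\sum_\ell (2\ell+1)\,I_\ell$ with $I_\ell=\sum_{m=-\ell}^\ell(1+\ell(\ell+1)-m^2)^{-\alpha r/2}$, and show $I_\ell\asymp \ell^{-\alpha r/2}$. The only difference is cosmetic---the paper obtains the asymptotic for $I_\ell$ by a one-line comparison with $\int_{-R}^{R}(1+R^2-x^2)^{-\alpha r/2}\,dx$ (valid for $\alpha r>2$), whereas you make this rigorous by the splitting at $|m|=\ell/2$ and handle the necessity for all $\alpha r>0$ directly via the terms $m=\pm\ell$; your version is in fact a touch more complete on that last point.
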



We also present another example of an operator (on $\SO3$)
which is not covered by H\"ormander's sum of squares theorem.
Namely, we consider the following family of `Schr{\"o}dinger operators'
\[\mathcal H_{\gamma}=iD_3-\gamma(D_1^2+D_2^2),\]
for a parameter $0<\gamma<\infty.$ For $\gamma=1$ it 
was shown in \cite{Ruzhansky-Turunen-Wirth:arxiv} that
$\mathcal H_1+cI$ is globally hypoelliptic if and only if
$0\not\in\{c+\ell(\ell+1)-m(m+1): \ell\in\N, m\in\mathbb Z, |m|\leq\ell\}.$ 

The matrix-symbol of $I+\mathcal H_{\gamma}$ is given by 
\begin{equation}\label{EQ:SO3-schor}
\sigma_{I+\mathcal H_{\gamma}}(\ell)_{mn}=(1+m-\gamma m^2+\gamma\ell(\ell+1))\delta_{mn},\quad m,n\in\mathbb Z,\;
-\ell\leq m,n\leq\ell,
\end{equation}
where as before $\delta_{mn}$ is the Kronecker delta, and we let $m,n$ run from
$-\ell$ to $\ell$ rather than from $0$ to $2\ell+1$.
Similarly to the case of $\gamma=1$ above, for $\gamma\geq 1$ one shows that the second order differential
operator $I+\mathcal H_{\gamma}$ is globally hypoelliptic and its powers are
pseudo-differential operators with symbols
$$\sigma_{(I+\mathcal H_{\gamma})^{-\alpha/2}}(\ell)_{mn}=(1+m-\gamma m^2+\gamma\ell(\ell+1))^{-\alpha/2}\delta_{mn},$$
with $m,n\in\mathbb Z,\;
-\ell\leq m,n\leq\ell$.
We now have
$$
\|\sigma_{(I+\mathcal H_{\gamma})^{-\alpha/2}}(\ell)\|_{S_r}^r=
\Tr |\sigma_{(I+\mathcal H_{\gamma})^{-\alpha/2}}(\ell)|^r=
\sum_{m=-\ell}^\ell\left|1+m-\gamma m^2+\gamma\ell(\ell+1)\right|^{-\frac{\alpha r}{2}},
$$
where $\ell\in\N_0$. In order to estimate this sum, we consider the integral 
\beq
\int_{-R}^R |1+x-\gamma x^2+\gamma R^2+\gamma R|^{-\frac{\alpha r}{2}}dx.
\label{intschr}\eq
Using the inequality $ -R\leq x\leq R$ and from the identity 
$1+x-\gamma x^2+\gamma R^2+\gamma R =1+(R+x)+\gamma(R^2-x^2)+\gamma R-R,$
we get
$1+x-\gamma x^2+\gamma R^2+\gamma R\geq (\gamma -1)R$.
In particular, if $\gamma >1$ we obtain 
\[\int_{-R}^R(1+x-\gamma x^2+\gamma R^2+\gamma R)^{-\frac{\alpha r}{2}}dx\approx C R^{-\frac{\alpha r}{2}},\]
for large $R$.
Therefore, for $\gamma>1$, 
$$\sum_{m=-\ell}^\ell\left(1+m-\gamma m^2+\gamma\ell(\ell+1)\right)^{-\frac{\alpha r}{2}}$$ 
is of order $\ell^{-\frac{\alpha r}{2}}$.
Hence, $\|\sigma_{(I+\mathcal H_{\gamma})^{-\alpha/2}}(\ell)\|_{S_r}$
 is of order $\ell^{-\frac{\alpha}{2}}$ in this case.
 Consequently, if $\gamma > 1$ we obtain
 $$
 \sum_{[\xi]\in\widehat{\SO3}}d_{\xi} \|\sigma_{(I+\mathcal H_{\gamma})^{-\alpha/2}}(\xi)\|_{S_r}^r\approx
 C \sum_{\ell\in\N} \ell^{1-\frac{\alpha r}{2}}.
 $$
Thus, if $\gamma > 1$ we get 
$$(I+\mathcal H_{\gamma})^{-\alpha/2}\in S_r \textrm{ if and only if } \alpha r>4
\qquad (\gamma > 1).$$
Now, let us consider the case $0<\gamma \leq 1$.
If $-1\not\in\{\gamma\ell(\ell+1)-\gamma m^2+m): \ell\in\N_0, m\in\mathbb Z, |m|\leq\ell\},$ 
or, more generally, if 
 $-c\not\in\{\gamma\ell(\ell+1)-\gamma m^2+m): \ell\in\N_0, m\in\mathbb Z, |m|\leq\ell\},$ 
the operator $cI+\mathcal H_\gamma$ is invertible, and we can define its real powers as above.
Arguing as above, the corresponding modification of the integral (\ref{intschr}) does not decay with respect to $R$, 
so by the characterisation of Schatten classes in Theorem \ref{charnucSr} 
we get
$$(cI+\mathcal H_{\gamma})^{-\alpha/2}\notin S_r \;
\textrm{ for all } 0<r<\infty \textrm{ and } \alpha\in\mathbb R
\qquad (0<\gamma \leq 1).$$
A similar result holds then also on $\SU2\simeq\mathbb S^3$.

\section{A trace formula in the trace class $S_1(L^2(G))$ } 
\label{SEC:traces}
In this section we give trace formulae for operators on compact topological groups. We start by recalling the definition of the trace 
 of operators on Hilbert spaces.
 
Let $T:H\rightarrow H$ be an operator in $S_1(H)$ and let  $\{\phi_k\}_k$ be any orthonormal basis for the Hilbert space $H$. Then, the series $\sum\limits_{k=1}^{\infty}\langle T\phi_k,\phi_k\rangle_H$ is absolutely convergent and the sum is independent of the choice of the orthonormal basis $\{\phi_k\}_k$. Thus, we can define the trace $\Tr (T)$ of any linear operator
$T:H\rightarrow H$ in $S_1(H)$ by $$\Tr (T)=\sum\limits_{k=1}^{\infty}\langle T\phi_k,\phi_k\rangle_H,$$
where $\{\phi_k:k=1,2,\dots\}$ is any orthonormal basis for $H$. 
 
We will apply the definition above to the orthonormal basis of $L^2(G)$ given by
$$
\left\{ \sqrt{d_\xi}\,\xi_{ij}: \; 1\leq i,j\leq d_\xi,\; [\xi]\in\Gh \right\}.
$$
\begin{thm}\label{trace1} 
Let $G$ be a compact topological group. Let  $A$ be a left-invariant operator in $S_1(L^2(G))$ with matrix-valued symbol $\sigma_A(\xi)$.
Then its trace is given by 
\beq 
\label{trace}\Tr A=\sum\limits_{[\xi]\in\widehat{G} }d_{\xi}\Tr(\sigma_A(\xi)).
\eq
\end{thm}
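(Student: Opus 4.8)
The plan is to evaluate the defining series for the trace directly on the Peter--Weyl orthonormal basis $\{\sqrt{d_\xi}\,\xi_{ij}\}$, using the quantization formula \eqref{EQ:A-quant} to compute $A\xi_{ij}$ explicitly. Since $A$ is left-invariant, its symbol $\sigma_A(\xi)$ is independent of $x$, and the key input is the behaviour of the global Fourier transform on the matrix coefficients $\xi_{ij}$ together with the orthogonality relations of the Peter--Weyl theorem. First I would record that, by \eqref{EQ:FG} and the Schur orthogonality relations, $\widehat{\xi_{ij}}(\eta)$ is supported on $[\eta]=[\xi]$ and equals $\frac{1}{d_\xi}$ times the elementary matrix $E_{ji}$ (the matrix with $1$ in the $(j,i)$ entry and $0$ elsewhere), so that applying \eqref{EQ:A-quant} collapses the sum over $[\eta]$ to the single term $[\eta]=[\xi]$ and yields $A\xi_{ij}(x)=\sum_{k}\xi_{ik}(x)\,\sigma_A(\xi)_{kj}$.

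Next I would compute the diagonal matrix element. With the normalisation $\sqrt{d_\xi}\,\xi_{ij}$, the inner product $\langle A(\sqrt{d_\xi}\,\xi_{ij}),\sqrt{d_\xi}\,\xi_{ij}\rangle_{L^2(G)}=d_\xi\langle A\xi_{ij},\xi_{ij}\rangle_{L^2(G)}$, and inserting the formula for $A\xi_{ij}$ together with the orthogonality $\langle\xi_{ik},\xi_{ij}\rangle_{L^2(G)}=\frac{1}{d_\xi}\delta_{kj}$ gives $d_\xi\cdot\frac{1}{d_\xi}\,\sigma_A(\xi)_{jj}=\sigma_A(\xi)_{jj}$. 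Summing over the basis, i.e. over $1\le i,j\le d_\xi$ and $[\xi]\in\widehat{G}$, the index $i$ contributes a factor $d_\xi$ while the sum over $j$ produces $\Tr(\sigma_A(\xi))$, yielding exactly $\sum_{[\xi]\in\widehat{G}}d_\xi\Tr(\sigma_A(\xi))$, which is \eqref{trace}. The absolute convergence and basis-independence of this sum are guaranteed a priori by the hypothesis $A\in S_1(L^2(G))$ together with the definition of the trace recalled just before the theorem, so no separate convergence argument is needed once the termwise identity is established.

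The main point requiring care — the step I expect to be the mild obstacle — is justifying the manipulation of \eqref{EQ:A-quant} applied to $\xi_{ij}$: one should make sure the series for $A\xi_{ij}$ genuinely reduces to a finite sum, which it does because $\widehat{\xi_{ij}}$ is finitely supported on $\widehat{G}$, so there are no convergence subtleties at that stage and the computation is purely algebraic. Alternatively, and perhaps more cleanly, one can bypass the pointwise computation entirely: an invariant operator $A=\Op(\sigma_A)$ is the Fourier multiplier $\widehat{Af}(\xi)=\sigma_A(\xi)\widehat{f}(\xi)$, so on the $\ell^2(\widehat{G})$ side it acts blockwise as multiplication by $\sigma_A(\xi)$ on each isotypic component, a component that appears with multiplicity $d_\xi$; the trace of $A$ is then the sum over $[\xi]$ of $d_\xi$ times the trace of the block, which is again $\sum_{[\xi]}d_\xi\Tr(\sigma_A(\xi))$. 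I would present the first (basis) argument as the main proof since it matches the setup stated right before the theorem, and mention the multiplier viewpoint as the underlying structural reason.
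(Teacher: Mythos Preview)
Your proposal is correct and follows essentially the same approach as the paper: compute $\Tr A$ by evaluating $\sum\langle A\phi,\phi\rangle$ over the Peter--Weyl basis $\{\sqrt{d_\xi}\,\xi_{ij}\}$, using the explicit action of the left-invariant operator on matrix coefficients and the Schur orthogonality relations. The paper reaches $A\eta_{\ell m}$ via the kernel expansion rather than the quantization formula \eqref{EQ:A-quant}, but this is only a cosmetic difference; your tracking of the $d_\xi$ normalisation factors is in fact slightly cleaner than the paper's intermediate steps, and your alternative Fourier-multiplier remark is a helpful structural gloss.
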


\begin{proof} Let $A$ be a left-invariant operator which belongs to $S_1(L^2(G))$. We denote $\sigma=\sigma_A$, and the formula 
\[
K(x,y)=\sum\limits_{[\xi]\in\widehat{G} }d_{\xi} \Tr(\xi(x)\sigma(\xi)\xi(y)^* )
\]
represents the integral
kernel of $A$. We will calculate $\langle A\eta_{\ell m},\eta_{\ell m}\rangle _{L^2(G)}$ for $1\leq \ell,m\leq d_\eta,\; [\eta]\in\Gh$.
 We observe that
\[ \Tr(\xi(x)\sigma(\xi)\xi(y)^* )=\sum\limits_{i,j=1}^{d_{\xi}}(\xi(x)\sigma(\xi))_{ij}\overline{\xi(y)}_{ij}.\]
Hence
\begin{align*}
A\eta_{\ell m}(x)=&\int_G \sum\limits_{[\xi]\in\widehat{G} }d_{\xi} \Tr(\xi(x)\sigma(\xi)\xi(y)^* )\eta_{\ell m}(y)dy\\
=&\int_G \sum\limits_{[\xi]\in\widehat{G} }d_{\xi} \sum\limits_{i,j=1}^{d_{\xi}}(\xi(x)\sigma(\xi))_{ij}\overline{\xi(y)}_{ij}\eta_{\ell m}(y)dy\\
=&\int_G \sum\limits_{[\xi]\in\widehat{G} }d_{\xi} \sum\limits_{i,j=1}^{d_{\xi}}\sum\limits_{k=1}^{d_{\xi}}\xi(x)_{ik}\sigma(\xi)_{kj}\overline{\xi(y)}_{ij}\eta_{\ell m}(y)dy.\\
\end{align*}
Now, since $\langle \xi_{ik},\eta_{\ell m}\rangle _{L^2(G)}=d_\xi^{-1}\delta_{(i,j),(\ell, m)}$ by the orthonormality  of the system $\{\sqrt{d_\xi}\xi_{ij}\}$,  we obtain
\[\langle A\eta_{\ell m},\eta_{\ell m}\rangle _{L^2(G)}=d_{\eta}d_{\eta}^{-1}\sigma(\eta)_{mm}=\sigma(\eta)_{mm}\]
Therefore,
\begin{align*}
\sum\limits_{[\eta]\in\widehat{G} }\sum\limits_{\ell, m}\langle A\eta_{\ell m},\eta_{\ell m}\rangle _{L^2(G)}=\sum\limits_{[\eta]\in\widehat{G}}\sum\limits_{\ell, m}\sigma(\eta)_{mm}
=\sum\limits_{[\eta]\in\widehat{G} }d_{\eta}\Tr (\sigma(\eta)),
\end{align*}
concluding the proof.
\end{proof}


\end{document}